\documentclass[11pt]{amsart}
\expandafter\providecommand\csname subjclassname@2020\endcsname{%
  \textup{2020} Mathematics Subject Classification}
\usepackage{amsmath,amssymb,amsthm,mathtools,bm}
\usepackage{booktabs}
\usepackage{tikz}
\usetikzlibrary{arrows.meta,calc,positioning}
\usepackage{xurl}
\usepackage[colorlinks=true,linkcolor=blue!55!black,citecolor=blue!55!black,urlcolor=blue!55!black]{hyperref}
\usepackage[capitalise,nameinlink,noabbrev]{cleveref}
\hypersetup{pdftitle={The strong type endpoint estimate for the local spherical maximal function},pdfauthor={Sanghyuk Lee and Shuijiang Zhao},pdfsubject={Mathematics Subject Classification 42B25, 35L05},pdfkeywords={spherical maximal function, wave equation, bilinear restriction, decoupling, endpoint estimates}}
\numberwithin{equation}{section}
\newtheorem{theorem}{Theorem}[section]
\newtheorem{proposition}[theorem]{Proposition}
\newtheorem{lemma}[theorem]{Lemma}
\newtheorem{corollary}[theorem]{Corollary}
\newtheorem*{citedtheorem}{Theorem}
\theoremstyle{remark}
\newtheorem{remark}[theorem]{Remark}

\newcommand{\R}{\mathbb R}
\newcommand{\Z}{\mathbb Z}
\newcommand{\Sph}{\mathbb S}
\newcommand{\supp}{\operatorname{supp}}
\newcommand{\dist}{\operatorname{dist}}
\newcommand{\eps}{\varepsilon}
\newcommand{\M}{\mathcal M}

\title[The spherical maximal function at the endpoint]{The strong type endpoint estimate for the local spherical maximal function}
\author{Sanghyuk Lee}
\address{Department of Mathematical Sciences and RIM,\newline
Seoul National University, Seoul 08826, Republic of Korea}
\email{shklee@snu.ac.kr}
\author{Shuijiang Zhao}
\address{Research Institute of Mathematics,\newline
Seoul National University, Seoul 08826, Republic of Korea}
\email{shuijiangzhao25@snu.ac.kr}
\subjclass[2020]{Primary 42B25; Secondary 35L05, 42B37}
\keywords{Spherical maximal function, wave equation,  endpoint estimates}
\begin{document}
\begin{abstract}
Let $p_d=(d^2+1)/(d(d-1))$ and $q_d=(d^2+1)/(d-1)$. We prove that the
local spherical maximal operator $\sup_{1\le t\le2}|f*d\sigma_t|$ is
bounded from $L^{p_d}(\R^d)$ to $L^{q_d}(\R^d)$ for every $d\ge2$.
Previously only the restricted weak type estimate was known at this
endpoint. Together with known results, this completes the
characterization of the strong type $L^p\to L^q$ bounds for the local
spherical maximal operator. We also prove the maximal estimate
\[
 \Big\|\sup_{t\in\R}|e^{it\sqrt{-\Delta}}f|\Big\|_{L^{2(d+1)/(d-1)}(\R^d)}
 \lesssim\|f\|_{\dot H^{d/(d+1)}(\R^d)}
\]
for the wave equation at the critical exponent. Both results are
obtained by exploiting the gain in bilinear estimates that comes from
the interaction between high and low frequencies.
\end{abstract}
\maketitle
\section{Introduction}\label{sec:introduction}

Let $d\sigma$ be the normalized surface measure on the unit sphere
$\Sph^{d-1}\subset\R^d$, $d\ge2$, and set
\[
 \M f(x)=\sup_{1\le t\le2}|A_t f(x)|, 
\]
where 
\[ A_t f(x)=\int_{\Sph^{d-1}}f(x-t\omega)\,d\sigma(\omega).\]
Spherical maximal operators arose in connection with the
differentiation of averages over hypersurfaces. Stein
\cite{Stein1976} proved that the full maximal operator
$\sup_{t>0}|A_t f|$ is bounded on $L^p(\R^d)$ for $p>d/(d-1)$ when
$d\ge3$, and Bourgain \cite{Bourgain1986} proved the corresponding
result for $p>2$ in the plane. Both ranges are sharp. At the critical
exponent $p=d/(d-1)$, Bourgain \cite{Bourgain1985} obtained a
restricted weak type estimate for $d\ge3$, whereas the analogous
estimate fails in the plane \cite{STW2003}.

When the radii are restricted to a compact interval, the maximal
operator improves integrability, and one asks for which pairs $(p,q)$
\begin{equation}\label{intro:eq:bound}
 \|\M f\|_{L^q(\R^d)}\le C\|f\|_{L^p(\R^d)}.
\end{equation}
Throughout the paper we write
\[
 p_d=\frac{d^2+1}{d(d-1)},\qquad q_d=\frac{d^2+1}{d-1},
\]
and we denote by $\mathcal H_d$ the convex hull of the points
\[
 P_1=(0,0),\quad
 P_2=\Big(\frac{d-1}{d},\frac{d-1}{d}\Big),\quad
 P_3=\Big(\frac{d-1}{d},\frac1d\Big),\quad
 P_4=\Big(\frac1{p_d},\frac1{q_d}\Big).
\]
For $d=2$ we have $P_2=P_3$, so $\mathcal H_2$ is a triangle.
Schlag \cite{Schlag1997} and Schlag--Sogge \cite{SchlagSogge1997}
showed that, for $p<q$, \eqref{intro:eq:bound} holds if $(1/p,1/q)$
lies in the interior of $\mathcal H_d$ and fails if $(1/p,1/q)$ lies
outside $\mathcal H_d$. 

The first author \cite{Lee2003} obtained estimates on the boundary of
$\mathcal H_d$ and proved, in particular, the restricted weak type
estimate
\[
 \|\M f\|_{L^{q,\infty}(\R^d)}\lesssim\|f\|_{L^{p,1}(\R^d)}
\]
for $(1/p,1/q)=P_4$, and also for $(1/p,1/q)=P_3$ when $d\ge3$.
Interpolating these bounds with the results mentioned above gives
strong type estimates on $\mathcal H_d\setminus([P_2,P_3]\cup\{P_4\})$ when
$d\ge3$; for $d=2$, the strong type estimates on the open segment
$(P_4,P_2)$ are contained in \cite[Theorem~1.1]{Lee2003}. Strong type
fails on the whole segment $[P_2,P_3]$, as Stein's example
\cite{Stein1976} shows; see also \cite[Theorem~4.3]{HKLL2026}.

Our main result is the strong type estimate at $P_4$.
\begin{theorem}\label{thm:main}
For every integer $d\ge2$,
\begin{equation}\label{eq:main}
 \|\M f\|_{L^{q_d}(\R^d)}\lesssim_d\|f\|_{L^{p_d}(\R^d)}.
\end{equation}
\end{theorem}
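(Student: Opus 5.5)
We will derive the strong type bound at $P_4$ from a Littlewood--Paley decomposition of $d\sigma$, combined with a bilinear argument that exploits interactions between widely separated frequencies. Split $A_tf=\sum_{j\ge0}A_t^jf$, where $A_t^j$ has multiplier $\widehat{d\sigma}(t\xi)\beta(2^{-j}|\xi|)$. Set $\M_jf=\sup_{1\le t\le2}|A_t^jf|$. The restricted weak type argument of \cite{Lee2003} rests on the single-scale bound
\[
\|\M_jf\|_{L^{q_d}}\lesssim 2^{j\epsilon(p,q)}\|f\|_{L^p}
\]
near $P_4$, with $\epsilon(1/p_d,1/q_d)=0$. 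That bound comes from a sharp $L^2\to L^{q}$ local smoothing (Stein--Tomas/Strichartz type) estimate for $e^{it\sqrt{-\Delta}}$, together with the trivial $L^1\to L^\infty$ bound $2^{j}$ for the kernel. Bourgain's interpolation trick then gives restricted weak type. To reach strong type we must sum over $j$ without loss, and a square function or orthogonality argument is the natural tool.

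The plan is to bound $\|\sup_t|\sum_jA^j_tf|\|_{q_d}$ by expanding the $q_d/2$ power, or more simply by estimating $\|\sup_t|\sum_j A_t^jf|^2\|_{q_d/2}$. This produces diagonal terms $j=k$ and off-diagonal terms $|A_t^jf|\,|A_t^kf|$ with $j\ll k$. The diagonal contribution is controlled as in \cite{Lee2003}, using interpolation with a slight $\ell^{p}$-to-$\ell^{q}$ gain: since $p_d<2<q_d$, we have $\|(\sum_j|g_j|^{q})^{1/q}\|_q\le(\sum_j\|g_j\|_q^q)^{1/q}$. Splitting $f=\sum f_j$ with $f_j=P_jf$ and using $\ell^{p_d}\subset\ell^{q_d}$ together with Littlewood--Paley, $\sum_j\|f_j\|_{p_d}^{q_d}\lesssim\|f\|_{p_d}^{q_d}$ holds because $p_d\le 2$ and $q_d\ge p_d$. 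The key step is the bilinear estimate for separated frequencies. For $j<k-C$, write $A_t^j$ and $A_t^k$ as sums of half-wave propagators $e^{\pm it\sqrt{-\Delta}}$ localized to frequencies $2^j$ and $2^k$. We aim to prove
\[
\Big\|\sup_{1\le t\le2}|A_t^jf\,A_t^kg|\Big\|_{L^{q_d/2}}\lesssim 2^{-c(k-j)}\|f\|_{L^{p_d}}\|g\|_{L^{p_d}}
\]
for some $c>0$. This exponential decay in $k-j$ lets the double sum converge.

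To prove the decay, we handle the sup in $t$ via Sobolev embedding in $t$, which costs $2^{k/r}$ in $L^r_t$. We then pass to space-time bilinear estimates for $e^{it\sqrt{-\Delta}}$ localized to frequencies $2^j\ll 2^k$. For such transverse pairs, namely a thin low-frequency cone piece against a high-frequency one, the bilinear $L^2$ estimate in $\R^{d+1}$ gives $\|e^{it\sqrt{-\Delta}}f_je^{it\sqrt{-\Delta}}g_k\|_{L^2_{x,t}}\lesssim 2^{j(d-1)/2}2^{-k/2}\|f_j\|_2\|g_k\|_2$, a gain of $2^{-(k-j)/2}$ over the product of linear Strichartz bounds. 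We interpolate this with the trivial $L^1\times L^1\to L^\infty$ kernel bounds and with the linear sharp estimates. The exponent $c$ at $P_4$ is then positive, since the linear estimates are exactly balanced there and the bilinear gain breaks the balance. The main obstacle is this step. The interpolation must be done carefully so that the bilinear gain survives both the Sobolev-in-$t$ loss and the passage from $L^2$ data to $L^{p_d}$ data, which uses the kernel decay $2^{j}(1+2^j||x|-t|)^{-N}|x|^{-(d-1)/2}$. If direct interpolation loses too much, we would fall back to a Bourgain-type summation trick in the bilinear setting: we would prove restricted weak type for the bilinear operators with two slightly different exponent pairs on either side of $P_4$, with opposite signs of growth in $j,k$, and sum them.
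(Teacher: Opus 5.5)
Your outline matches the paper's skeleton: Littlewood--Paley pieces, a bilinear bound $\|\sup_t|A^j_tf\,A^k_tg|\|_{q_d/2}\lesssim2^{-c|j-k|}\|f\|_{p_d}\|g\|_{p_d}$, then expansion of a power. However, the decay estimate, which you rightly call the crux, does not follow from your tools except when $d=3$.

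First, the bilinear $L^2$ bound you quote violates scaling. The correct form is $\|Uf_\mu\,Ug_\lambda\|_{L^2(\R^{d+1})}\lesssim\mu^{(d-1)/2}\|f_\mu\|_2\|g_\lambda\|_2$ for $d\ge3$, valid for all angles. Combine it with Bernstein in $t$ (cost $\lambda^{1/2}$) and the symbol factor $(\lambda\mu)^{-(d-1)/2}$. This gives an $L^2\times L^2\to L^2$ bound $(\lambda\mu)^{-(d-2)/4}\rho^{(d-2)/4}$ for the spherical pieces. Interpolating with the $L^1\times L^1\to L^\infty$ bound $\lambda\mu$ cancels the power of $\lambda\mu$ only at $(1/p,1/q)=(d/(d+2),1/(d+2))$.

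For $d=3$ this point is exactly $P_4=(3/5,1/5)$, with gain $\rho^{1/5}$. Since $p_3=5/3<2$, your square expansion then finishes the proof. That is a legitimately simpler route than the paper's use of Tao's theorem in this dimension.

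For $d\ge4$, the point lies on the open segment $(P_1,P_4)$, where strong type was already known. Interpolating with the linear estimates cannot move the gain to $P_4$. The shell and Knapp examples give $\epsilon(p,q)\ge\max\{1/p-d/q,\ ((d+1)/p-(d-1)/q-(d-1))/2\}$, and both bounds vanish at the corner $P_4$. So any convex combination of estimates landing at $P_4$ with zero power of $\lambda\mu$ must use only estimates on the line $(d-1)/p+(d+1)/q=d-1$ through $(1,0)$ and $P_4$. Your $L^2$ point $(1/2,1/4)$ lies on that line only for $d=3$.

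What is needed is the maximal $L^2\times L^2\to L^{r_d}$ bound with gain, where $r_d=(d+1)/(d-1)<2$. That is bilinear cone restriction below $L^2$: Tao's theorem for waves of different frequencies, Lorentz-rescaled to track the angle (\cref{lem:cone-restriction}), then summed over angular scales. For $d\ge5$ it must also be combined with a splitting into radial shells of width $\mu$ before Bernstein in $t$. The full-band cost $\lambda^{1/r_d}$ you propose would use up the whole gain.

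The case $d=2$, which the theorem includes, fails twice.

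\emph{Decay.} The point $(1/2,1/4)$ is again off the line through $(1,0)$ and $P_4$. By \cref{rem:planar-obstruction}, even Tao's theorem, interpolated as for $d\ge3$, leaves at best a factor $\rho^{-1/6}$ after the Bernstein step in $t$. The paper gets decay from a new ingredient. It splits each high-frequency cap of width $\theta$ into subcaps of width $\rho^{1/2}\theta$. After Lorentz rescaling, the products have Fourier supports in $1\times\rho^{1/2}\times\rho$ sectors of an $O(\rho)$-neighborhood of the cone, so Bourgain--Demeter decoupling applies (\cref{lem:product-decoupling}). This is followed by Bourgain's summation over angular scales and multilinear real interpolation. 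Your Bourgain-trick fallback only sums estimates that already decay; it cannot create the decay.

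\emph{Summation.} Contrary to your claim, $p_2=5/2>2$. The square expansion therefore leaves $\sum_j\|f_j\|_{5/2}^2$, which is not bounded by $\|f\|_{5/2}^2$; the Littlewood--Paley step requires the $\ell^m$ exponent to satisfy $m\ge\max\{2,p\}$. You must expand the cube instead ($m=3$, admissible since $5/2\le3\le5$). Apply the bilinear decay to the smallest and largest frequencies and the single-scale bound to the middle factor, as in \cref{lem:power-sum}. Your diagonal step also confuses an $\ell^{q_d}$ sum with the $\ell^2$ sum that the square expansion actually produces.
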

Together with the estimates described above, \cref{thm:main} gives the
complete characterization of the $L^p$--$L^q$ boundedness of $\M$,
illustrated in \cref{fig:spherical-range}. For $d=2$ we interpret
$[P_2,P_3]$ as $\{P_2\}$. Testing on sums of widely separated
translates of a fixed nonnegative compactly supported function also
shows that $q\ge p$ is necessary.

\begin{corollary}\label{cor:full-range}
Let $d\ge2$ and $1\le p,q\le\infty$. Then
\[
 \|\M f\|_{L^q(\R^d)}\lesssim\|f\|_{L^p(\R^d)}
 \quad\Longleftrightarrow\quad
 (1/p,1/q)\in\mathcal H_d\setminus[P_2,P_3].
\]
\end{corollary}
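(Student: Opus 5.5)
The corollary is essentially bookkeeping once \cref{thm:main} is available, so the plan is to assemble the sufficiency direction by interpolation and the necessity direction from the known counterexamples.

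\emph{Sufficiency.} Let $(1/p,1/q)\in\mathcal H_d\setminus[P_2,P_3]$. We first collect strong type bounds at the vertices and on edges. At $P_1$ the bound is trivial, since $\M$ is bounded on $L^\infty$. At $P_4$ it is \cref{thm:main}. Points of $\mathcal H_d$ in the interior are covered by Schlag and Schlag--Sogge. On the boundary, the relevant edges other than $[P_2,P_3]$ are:
\begin{itemize}
\item $[P_1,P_2]$, the diagonal $p=q$, where Stein's theorem (Bourgain's for $d=2$) gives boundedness for $p>d/(d-1)$, that is, away from $P_2$;
\item $[P_1,P_4]$ and $[P_4,P_3]$ (for $d=2$, $[P_4,P_2]$).
\end{itemize}
On the last two edges we use the Riesz--Thorin theorem. $\M$ is sublinear, but it can be linearized by a measurable choice $t(x)$, so interpolation applies. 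Interpolating the strong bound at $P_4$ with the strong bound at $P_1$ covers $[P_1,P_4]$.

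For $d\ge3$, take a point on the half-open segment $[P_4,P_3)$. It lies on the segment between $P_4$ and a point $Q$ of $(P_3,P_2)$ or near $P_3$. Here I would instead interpolate the strong bound at $P_4$ with the strong bounds already known on $\mathcal H_d\setminus([P_2,P_3]\cup\{P_4\})$ from \cite{Lee2003}, as recalled above. Only $P_4$ itself was missing from that list, so the sufficiency direction reduces to combining that set with $\{P_4\}$. Since every point of $\mathcal H_d\setminus[P_2,P_3]$ lies either in the set from \cite{Lee2003} or equals $P_4$, no genuine interpolation is needed beyond what was cited.

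For $d=2$ the open segment $(P_4,P_2)$ is \cite[Theorem~1.1]{Lee2003}, and the remaining boundary pieces are handled the same way.

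\emph{Necessity.} Suppose the bound holds.
\begin{itemize}
\item The translate example noted before the corollary forces $q\ge p$, that is, $(1/p,1/q)$ lies on or below the diagonal.
\item If $p<q$ and $(1/p,1/q)\notin\mathcal H_d$, the Schlag--Sogge examples rule out boundedness.
\item If $p=q$, Stein's example (and Bourgain's sharpness in the plane) forces $p>d/(d-1)$. These points lie in $[P_1,P_2)\subset\mathcal H_d\setminus[P_2,P_3]$.
\item Points on $[P_2,P_3]$ are excluded by Stein's example, as in \cite[Theorem~4.3]{HKLL2026}.
\end{itemize}
The endpoint cases $p$ or $q\in\{1,\infty\}$ fit in automatically: $p=1$ gives $1/p=1>(d-1)/d$, which lies outside the hull.

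\emph{Main obstacle.} Nothing here is deep. The only care needed is in checking that the cited boundary results really exhaust $\mathcal H_d\setminus([P_2,P_3]\cup\{P_4\})$, including the diagonal for $d=2$ and the edge $[P_1,P_4]$. Where there is doubt, I would produce those points by Riesz--Thorin interpolation between $P_1$, $P_4$, and the diagonal bounds.
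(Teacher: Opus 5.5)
Your proposal is correct and follows the paper's route. Sufficiency is \cref{thm:main} at $P_4$ combined with the strong bounds already available on $\mathcal H_d\setminus([P_2,P_3]\cup\{P_4\})$: Schlag and Schlag--Sogge in the interior, Stein/Bourgain on the diagonal, and \cite{Lee2003}, including \cite[Theorem~1.1]{Lee2003} on $(P_4,P_2)$ when $d=2$. Necessity comes from the translate argument, the Schlag--Sogge examples, and Stein's example.

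You should discard the aside about $[P_4,P_3)$. Its points do not lie on segments from $P_4$ to points of $(P_3,P_2)$, since those segments run through the interior. Also, for $d\ge3$ the open edge $(P_4,P_3)$ cannot be reached by your fallback of Riesz--Thorin interpolation from $P_1$, $P_4$ and the diagonal. That edge genuinely rests on Lee's restricted weak type bound at $P_3$, which is already built into the set you cite.
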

For $d\ge3$, $\M$ is of restricted weak type $(p,q)$ whenever
$(1/p,1/q)$ lies on the excluded segment $[P_2,P_3]$; see
\cite[Theorem~4.3]{HKLL2026}. For $d=2$, $\M$ is not of restricted
weak type $(2,2)$, as follows from \cite[Proposition~1.5]{STW2003}
applied to the set of radii $[1,2]$.

\begin{figure}[htbp]
\centering
\begin{minipage}[t]{0.47\textwidth}
\centering
\begin{tikzpicture}[x=5.6cm,y=5.6cm,>=Stealth,font=\small]
  \coordinate (O) at (0,0);
  \coordinate (Q) at (.5,.5);
  \coordinate (P) at (.4,.2);
  \fill[blue!7] (O)--(Q)--(P)--cycle;
  \draw[->,gray!75] (-.045,0)--(.64,0) node[right] {$1/p$};
  \draw[->,gray!75] (0,-.045)--(0,.64) node[above] {$1/q$};
  \draw[blue!70!black,very thick] (O)--(Q)--(P)--cycle;
  \fill[blue!70!black] (O) circle[radius=1.8pt];
  \draw[red!75!black,thick,fill=white] (Q) circle[radius=2.6pt];
  \fill[orange!85!black] (P) circle[radius=2.7pt];
  \node[below left] at (O) {$P_1$};
  \node[above right,align=left] at (Q) {$P_2=P_3$};
  \node[below right] at (P) {$P_4$};
  \node at (.23,.17) {$\mathcal H_2$};
  \node[anchor=north west] at (.025,.63) {$d=2$};
\end{tikzpicture}
\end{minipage}\hfill
\begin{minipage}[t]{0.49\textwidth}
\centering
\begin{tikzpicture}[x=4.8cm,y=4.8cm,>=Stealth,font=\small]
  \coordinate (O) at (0,0);
  \coordinate (Ptwo) at ({2/3},{2/3});
  \coordinate (Pthree) at ({2/3},{1/3});
  \coordinate (Pfour) at ({3/5},{1/5});
  \fill[blue!7] (O)--(Ptwo)--(Pthree)--(Pfour)--cycle;
  \draw[->,gray!75] (-.045,0)--(.81,0) node[right] {$1/p$};
  \draw[->,gray!75] (0,-.045)--(0,.81) node[above] {$1/q$};
  \draw[blue!70!black,very thick] (Ptwo)--(O)--(Pfour)--(Pthree);
  \draw[red!75!black,very thick,densely dotted] (Ptwo)--(Pthree);
  \fill[blue!70!black] (O) circle[radius=1.8pt];
  \draw[red!75!black,thick,fill=white] (Ptwo) circle[radius=2.6pt];
  \draw[red!75!black,thick,fill=white] (Pthree) circle[radius=2.6pt];
  \fill[orange!85!black] (Pfour) circle[radius=2.7pt];
  \node[below left] at (O) {$P_1$};
  \node[above right] at (Ptwo) {$P_2$};
  \node[right] at (Pthree) {$P_3$};
  \node[below right] at (Pfour) {$P_4$};
  \node at (.39,.25) {$\mathcal H_3$};
  \node[anchor=north west] at (.025,.80) {$d=3$};
\end{tikzpicture}
\end{minipage}
\caption{\small The strong type range of $\M$ in the coordinates
$(1/p,1/q)$ for $d=2$ and $d=3$: the shaded region together with the
solid boundary segments. The strong type estimate at $P_4$ is
established in \cref{thm:main}. The open circles and the dotted
segment are excluded. For $d=3$,
restricted weak type holds on the closed dotted segment; for $d=2$, it
fails at $P_2$.}
\label{fig:spherical-range}
\end{figure}

\subsubsection*{The wave maximal operator}
Our method also gives an endpoint estimate for the maximal function
associated with the wave equation. Let
\[
 q_c=\frac{2(d+1)}{d-1},\qquad s_c=\frac d2-\frac d{q_c}=\frac d{d+1}.
\]
Rogers--Villarroya \cite[Theorem~1]{RogersVillarroya2008} identified the
critical regularity for the $H^s$--$L^q$ estimates of
$\sup_{0<t<1}|e^{it\sqrt{-\Delta}}f|$ and proved the estimates above that
regularity. Cho--Lee--Li \cite[Theorem~1.1]{CLL2026} attained the
critical regularity for $2<q<\infty$, $q\ne q_c$, and obtained a weak
type estimate in Sobolev--Lorentz spaces at $q=q_c$.
The strong type estimate at $(q_c,s_c)$ has remained open;
the problem is also mentioned in \cite{KLS2025}.
\begin{theorem}\label{thm:wave}
For every $d\ge2$,
\begin{equation}\label{eq:wave-main}
 \Big\|\sup_{t\in\R}|e^{it\sqrt{-\Delta}}f|\Big\|_{L^{q_c}(\R^d)}
 \lesssim_d\|f\|_{\dot H^{s_c}(\R^d)}.
\end{equation}
\end{theorem}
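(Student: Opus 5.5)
We prove \eqref{eq:wave-main} by removing the $\epsilon$-loss in the Littlewood--Paley summation. The key input is a bilinear gain between widely separated frequencies. By scaling invariance it suffices to treat functions with compact Fourier support, provided the constants do not depend on that support. Decompose $f=\sum_{j\in\Z}f_j$ with $\widehat{f_j}$ supported in $\{|\xi|\sim 2^j\}$. For each single piece we have the sharp frequency-localized bound
\[
\Big\|\sup_{t}|e^{it\sqrt{-\Delta}}f_j|\Big\|_{L^{q_c}}\lesssim 2^{js_c}\|f_j\|_{L^2}.
\]
This follows from the $L^2\to L^{q_c}_{x,t}$ Strichartz estimate combined with a local-in-time Sobolev embedding in $t$. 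By the uncertainty principle, $e^{it\sqrt{-\Delta}}f_j$ is essentially constant on $t$-intervals of length $2^{-j}$, and the global $\sup_t$ is controlled through Strichartz on all of $\R^{d+1}$. The difficulty is summing over $j$: the trivial estimate $\sup_t|\sum_j u_j|\le(\sum_j\sup_t|u_j|^2)^{1/2}$ fails, and the triangle inequality loses the $\ell^2$ structure.

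Set $F=\sup_t|\sum_j u_j|$ with $u_j=e^{it\sqrt{-\Delta}}f_j$. Since $q_c>2$, we will estimate $\|F\|_{q_c}^2=\|F^2\|_{q_c/2}$. Pointwise,
\[
F^2\le \sum_{j,k}\sup_t|u_j|\,\sup_t|u_k|\le \sum_{j}\sup_t|u_j|^2+2\sum_{\ell\ge1}\sum_{j}\sup_t|u_j|\sup_t|u_{j+\ell}|.
\]
The diagonal part is handled by $\|\sum_j (\sup_t|u_j|)^2\|_{q_c/2}\le \big(\sum_j\|\sup_t|u_j|\|_{q_c}^{q_c/2\wedge 2}\dots\big)$. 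More precisely, because $q_c/2\ge1$ we may use Minkowski to get $\sum_j\|\sup_t|u_j|\|_{q_c}^2\lesssim\sum_j2^{2js_c}\|f_j\|_2^2\sim\|f\|_{\dot H^{s_c}}^2$. For $\ell\ge1$ we will prove the \textbf{key bilinear maximal estimate}
\[
\Big\|\sup_t|u_j|\,\sup_t|u_{j+\ell}|\Big\|_{L^{q_c/2}}\lesssim 2^{-\delta\ell}\,2^{js_c}\|f_j\|_2\,2^{(j+\ell)s_c}\|f_{j+\ell}\|_2
\]
for some $\delta>0$. Summing over $j$ by Cauchy--Schwarz and then over $\ell$ with the geometric factor gives the claim.

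To prove the bilinear estimate, rescale so that $j=0$ and write $N=2^\ell$. The low-frequency maximal function $\sup_t|u_0|$ is essentially constant on unit balls, and its local Sobolev in $t$ shows it is controlled by $\sup_{k\in\Z}$ of local $L^2_t$-averages of $u_0$ over $[k,k+1]$. Similarly, the high-frequency piece is handled on each unit time interval via Sobolev embedding in $t$ at scale $N^{-1}$. This reduces matters to a bilinear space-time estimate for $u_0\cdot u_N$, localized in unit time slabs and then summed in the slab index. The gain comes from transversality: the cone patches at scales $1$ and $N$ are separated, so the bilinear restriction estimate (Wolff/Tao for the cone) gives $\|u_0u_N\|_{L^{r}_{x,t}}$ with $r<q_c/2$ and an $N^{-\delta}$ improvement over the product of linear Strichartz bounds. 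Interpolating with the trivial bound at $q_c/2$ then retains a positive power of gain. Alternatively, one can use the elementary $L^2$ bilinear estimate $\|u_0u_N\|_{L^2(B_R)}\lesssim N^{(d-1)/2}\|f_0\|_2\|f_N\|_2$, interpolated with $L^\infty$ bounds.

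The main obstacle is the passage from the time-sup to the space-time norm: the supremum over all $t\in\R$ cannot simply be replaced by an $L^{q}_t$ norm without losing a derivative. We will therefore use the fact that the maximal functions are essentially constant on $N^{-1}$ and $1$ time scales. Handling the sum over infinitely many unit slabs requires an $\ell^{q_c/2}$ over slabs of the bilinear norms, which is less than the $\ell^\infty$ needed by the supremum. We expect to close this gap via the Strichartz estimate's global integrability, which also must be preserved after localization.
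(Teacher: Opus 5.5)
Your outer structure is sound and matches the paper. The single-frequency bound (global Strichartz at $q_c$ plus Bernstein in $t$, using $1/2+1/q_c=s_c$) is correct. Once a bilinear bound with decay $2^{-\delta|j-k|}$ is available, your $\ell^2$ summation is exactly the paper's.

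The gap is the pointwise step $F^2\le\sum_{j,k}\sup_t|u_j|\,\sup_t|u_k|$. It decouples the two suprema, so your key estimate concerns $u_j(x,t)\,u_{j+\ell}(x,s)$ at unrelated times $t\neq s$. That is a stronger statement than what is needed, and none of the tools you invoke reaches it: bilinear restriction for the cone and Bernstein in $t$ control only the product of the two waves at a common time. This is why your slab reduction ends with a supremum over pairs of different time slabs, which is the $\ell^{q_c/2}$ versus $\ell^\infty$ issue you leave open.

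The paper never decouples. It uses $\sup_t|\sum_ju_j|^2\le\sum_{j,k}\sup_t|u_j(t)\,u_k(t)|$. The product $Uf_\lambda\,Ug_\mu$ then has temporal Fourier support in an interval of length $O(\lambda)$, or $O(\mu)$ after splitting the large annulus into shells of width $\mu$. So \cref{lem:modulated-bernstein} reduces $\|\sup_t|Uf_\lambda\,Ug_\mu|\|_{r_d}$, with $r_d=q_c/2$, to a space-time $L^{r_d}_{x,t}$ bound.

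Even after this repair, the gain $2^{-\delta\ell}$ is not justified. Separation of the frequencies $1$ and $N$ is not angular separation. Two waves travelling in nearly the same direction are not transverse, and Tao's theorem concerns red and blue waves, whose directions are separated. What is needed is the following:
\begin{enumerate}
\item a Whitney decomposition in the angle $\theta$ between the caps;
\item a Lorentz rescaling of Tao's estimate with explicit dependence on $\theta$, $\lambda$, $\mu$ (\cref{lem:cone-restriction});
\item almost orthogonality at fixed $\theta$;
\item a lossless sum over $\theta$.
\end{enumerate}
For $d\ge3$ the paper gets the last step by interpolating with the trivial bound $\|\widehat f\|_1\|\widehat g\|_1$, so that the data are measured in $L^r$ with $r<2$ on sectors, and then applying the square summation \cref{lem:angular-sum}.

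Your phrase ``interpolating with the trivial bound retains a positive power'' does not do this bookkeeping. In the plane it fails outright: that interpolation gives no decay in $\rho$ (\cref{rem:planar-obstruction}). There the paper needs the decoupling \cref{lem:product-decoupling} for subcaps of width $\rho^{1/2}\theta$, followed by multilinear interpolation.

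Your alternative $L^2$ bilinear bound also cannot help: it carries the factor $N^{(d-1)/2}$ of the high frequency rather than the low one, so it produces no gain.
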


\subsubsection*{Related work}
The connection between spherical maximal functions and local smoothing
estimates for the wave equation goes back to Sogge \cite{Sogge1991}
and Mockenhaupt--Seeger--Sogge \cite{MSS1992,MSS1993}. The sharp square
function estimate of Guth--Wang--Zhang \cite{GWZ2020} for the cone in
$\R^3$ gives the sharp local smoothing estimate in $\R^{2+1}$.
For wave equations with potentials in $\R^3$, Beceanu--Goldberg
\cite{BeceanuGoldberg2014} established reversed space-time estimates
and maximal estimates.

$L^p$ improving estimates have also been studied for maximal operators
over restricted sets of radii. Anderson--Hughes--Roos--Seeger
\cite{AHRS2021} and Roos--Seeger \cite{RoosSeeger2023} considered
fractal sets of radii, and Lee--Roncal--Zhang--Zhao \cite{LRZZ2025}
obtained related endpoint estimates in the plane; their restricted weak
type result also covers the case where the set of radii is $[1,2]$.
Variation estimates \cite{BORSS2022} and sparse bounds
\cite{Lacey2019} also rely on sharp $L^p\to L^q$ estimates for local
spherical maximal operators. We refer to the survey of Roos--Seeger
\cite{RoosSeeger2025} for these and related problems.

Liu--Shen--Song--Yan \cite{LSSY2026} studied $L^p\to L^q$ estimates
for Stein's spherical maximal operators of complex order. Their
sufficient conditions involve strict inequalities in the real part of
the order. Our method gives strong type estimates at a family of boundary
points not covered by their strict sufficient conditions;
see \cref{rem:analytic}.

\subsubsection*{Outline of the proof}
Both theorems are derived from bilinear maximal estimates with decay
in the ratio of the input frequencies. Let $|D|=\sqrt{-\Delta}$ and
$U(t)=e^{it|D|}$. For $\lambda\ge\mu\ge2$ let
$\rho=\mu/\lambda$ and $r_d=(d+1)/(d-1)$, and let $f_\lambda$ and
$g_\mu$ have Fourier supports in the annuli $\{|\xi|\simeq\lambda\}$
and $\{|\xi|\simeq\mu\}$. We show that
\begin{equation}\label{intro:eq:wave-gap}
 \left\|\sup_{1\le t\le2}
 |U(t)f_\lambda\,U(t)g_\mu|\right\|_{r_d}
 \lesssim_d (\lambda\mu)^{d/(d+1)}\rho^{\gamma_d}
 \|f_\lambda\|_2\|g_\mu\|_2
\end{equation}
for some $\gamma_d>0$. The factor $\rho^{\gamma_d}$ is the gain coming
from the interaction between the high frequency $\lambda$ and the low
frequency $\mu$, and it allows us to sum over dyadic frequencies at
the critical exponent. A gain of this type, from the interaction
between pieces localized at two different dyadic scales, was used by
Jeong--Lee--Ryu for Hermite eigenfunctions
\cite{JeongLeeRyu2026,JeongLeeRyu2026Planar}; there the scales measure
the distance to the turning sphere rather than frequencies. By
scaling, \eqref{intro:eq:wave-gap} extends to the supremum over all
$t\in\R$ and to all frequencies $\lambda\ge\mu>0$, and summation over
dyadic frequencies proves \cref{thm:wave}.

The starting point is Tao's bilinear restriction theorem for the cone
for waves of different frequencies \cite[Theorem~1.1]{Tao2001}; see
also Wolff \cite{Wolff2001}. A Lorentz transformation and scaling give
an estimate for pairs of caps separated by an angle $\simeq\theta$,
with explicit dependence on $\theta$, $\lambda$, and $\mu$
(\cref{lem:cone-restriction}). We then follow the scheme of
\cite[Section~2]{Lee2003}: we estimate a single pair of separated
caps, sum over the pairs at a fixed angular scale, and finally sum over
the angular scales. For $d\ge3$ we interpolate the estimate for a pair
of caps with the trivial bound
$\|Uf\,Ug\|_\infty\le\|\widehat f\|_1\|\widehat g\|_1$. The sum over
angular scales is then controlled by a square summation lemma
(\cref{lem:angular-sum}), which follows from the real interpolation
identity $(L^r,L^{q_*})_{\vartheta,2}=L^2$. To take the supremum in
$t$, we decompose the larger annulus into shells of width $\mu$ and use
Bernstein's inequality in $t$; this gives \eqref{intro:eq:wave-gap}.

In the plane this interpolation does not produce decay in $\rho$
(\cref{rem:planar-obstruction}). Instead, we divide each cap of width
$\theta$ into subcaps of width $\rho^{1/2}\theta$. After a Lorentz
rescaling, the space-time Fourier support of the product of the
high-frequency wave on a subcap and the low-frequency wave on the
paired cap lies in a sector of dimensions $1\times\rho^{1/2}\times\rho$
of the $O(\rho)$ neighborhood of the cone. Hence the $\ell^2$
decoupling theorem for the cone of Bourgain--Demeter \cite{BD2015}
applies (\cref{lem:product-decoupling}), which leads to estimates at a
fixed angular scale. Summing over the angular scales as in
\cite[Lemma~2.6]{Lee2003} gives weak type estimates, and multilinear
real interpolation \cite{GLLZ2012} gives the strong bilinear estimate
\eqref{eq:planar-model-gap} for $L^{5/2}$ inputs, which is used for
\cref{thm:main} in the plane. The same argument at other exponents,
followed by interpolation with the trivial $L^1\times L^1\to L^\infty$
estimate, gives \eqref{intro:eq:wave-gap} for $d=2$.

For the spherical averages we use the asymptotic expansion of
$\widehat\sigma$, which expresses the high-frequency part of $A_t$ in
terms of half-wave propagators with symbols of order $-(d-1)/2$. For
$d\ge3$ we interpolate \eqref{intro:eq:wave-gap} with the trivial
$L^1\times L^1\to L^\infty$ estimate for spherical averages, and for
$d=2$ we use \eqref{eq:planar-model-gap} directly.
In both cases we obtain
\begin{equation}\label{intro:eq:spherical-gap}
 \left\|\sup_{1\le t\le2}|A_t f_j\,A_t g_k|\right\|_{q_d/2}
 \lesssim_d 2^{-\delta_d|j-k|}
 \|f_j\|_{p_d}\|g_k\|_{p_d},\qquad \delta_d>0,
\end{equation}
where $f_j$ and $g_k$ have Fourier supports in
$\{|\xi|\simeq2^j\}$ and $\{|\xi|\simeq2^k\}$, $j,k\ge1$; the factor
$2^{-\delta_d|j-k|}$ is the corresponding gain from the interaction
between high and low frequencies.
\Cref{thm:main} then follows from a summation lemma
(\cref{lem:power-sum}): we expand the $m$-th power of the sum over
frequencies and apply \eqref{intro:eq:spherical-gap} to the factors
with the smallest and the largest frequencies. The lemma requires
$\max\{2,p_d\}\le m\le q_d$, so we take $m=2$ for $d\ge3$ and $m=3$ for
$d=2$, where $p_2=5/2$.

\subsection*{Organization}
In \cref{sec:prelim} we collect elementary estimates, including the
summation lemma used in the proof of \cref{thm:main}. In
\cref{sec:cone} we state Tao's theorem and derive the rescaled
estimate for separated caps. The bilinear estimate
\eqref{intro:eq:wave-gap} is proved for $d\ge3$ in \cref{sec:higher}
and for $d=2$ in \cref{sec:planar}, where we also prove
\eqref{eq:planar-model-gap}, which is needed for the spherical maximal
operator in the plane. \Cref{thm:wave} is proved in \cref{sec:wave},
and \cref{thm:main} is proved in \cref{sec:spherical}.

\subsection*{Notation}
Let $\psi\in C_c^\infty((1/2,2))$ be such that
$\sum_{j\in\Z}\psi(2^{-j}s)=1$ for $s>0$. For $\ell>0$, $P_\ell$
denotes the Fourier multiplier with symbol $\psi(|\xi|/\ell)$; when an
input is already localized to such an annulus, we also use $P_\ell$
for a slightly wider cutoff that equals one on its Fourier support.
Here $\ell$ need not be dyadic; dyadic values $\ell=2^j$ appear only in
Littlewood--Paley decompositions, where we write $P_{2^j}$. A
subscript $\ell$ on a function, as in $f_\ell$, indicates that its
Fourier transform is supported in $\{\ell/2\le|\xi|\le2\ell\}$; for an
integer index $j$, $f_j$ refers to the frequency $2^j$. We write
$A\lesssim B$ if $A\le CB$ with a constant $C$ independent of the
frequencies, the angular parameters, and the number of terms in the
sums involved, and $A\simeq B$ if $A\lesssim B\lesssim A$; a subscript,
as in $\lesssim_d$, indicates the parameters on which $C$ depends.
We use the homogeneous Sobolev norm
$\|f\|_{\dot H^s}=\||\xi|^s\widehat f(\xi)\|_2$.
For the endpoint estimates proved below, we first work with Schwartz
functions or finite sums of frequency localized Schwartz functions,
and then pass to the corresponding completions.
\section{Preliminary estimates}\label{sec:prelim}
We collect three elementary lemmas: Bernstein's inequality in the time
variable, an almost orthogonality estimate for functions with Fourier
supports in rectangles, and a lemma for summing bilinear estimates
with decay in the ratio of frequencies. We also record two standard
reductions, concerning the interpolation of maximal estimates and the
signs of the half-wave propagators.

\begin{lemma}[Bernstein's inequality in time]\label{lem:modulated-bernstein}
Let $1\le r<\infty$, $B,C>0$, and $a\in\R$. Suppose that
$H\in L^r(\R^{d+1})$ and, for almost every $x$, the Fourier transform
of $t\mapsto H(x,t)$ is supported in $[a-CB,a+CB]$.
Taking the supremum in $t$ for the continuous representative, we have
\[
 \|\sup_t|H(\cdot,t)|\|_{L^r}\lesssim_{C,r}B^{1/r}\|H\|_{L^r_{x,t}},
\]
with a constant independent of $a$ and $B$.
\end{lemma}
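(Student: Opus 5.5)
The plan is to reduce to a pointwise reproducing formula in $t$. First remove the modulation: set $G(x,t)=e^{-iat}H(x,t)$. Then $|G|=|H|$, and for a.e.\ $x$ the Fourier transform of $t\mapsto G(x,t)$ is supported in $[-CB,CB]$. It therefore suffices to treat $a=0$, and the constants will clearly not depend on $a$. Next, by scaling $t\mapsto t/B$, i.e.\ replacing $G$ by $G_B(x,t)=G(x,t/B)$, we get $\|G_B\|_{L^r_{x,t}}=B^{1/r}\|G\|_{L^r_{x,t}}$ while the supremum in $t$ is unchanged. So it suffices to prove the case $B=1$ and frequency support in $[-C,C]$.

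For $B=1$, fix $\chi\in\mathcal S(\R)$ with $\widehat\chi=1$ on $[-C,C]$. For a.e.\ $x$ we have $G(x,\cdot)\in L^r(\R)$ (by Fubini) with compactly supported distributional Fourier transform. Hence $G(x,\cdot)=G(x,\cdot)*\chi$, and this identity holds pointwise for the continuous representative, since the right side is smooth. Some care is needed here: $\widehat{G(x,\cdot)}$ is a tempered distribution supported in $[-C,C]$, and multiplying it by $\widehat\chi$, which equals one near that support, leaves it unchanged, so $G(x,\cdot)$ is the smooth function $G(x,\cdot)*\chi$. Hölder's inequality then gives, for each $t$,
\[
|G(x,t)|\le \int |G(x,s)||\chi(t-s)|\,ds\le \|G(x,\cdot)\|_{L^r(\R)}\|\chi\|_{L^{r'}(\R)},
\]
with $\|\chi\|_{L^{r'}}\lesssim_{C,r}1$. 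Taking the supremum in $t$, raising to the power $r$ and integrating in $x$ yields $\|\sup_t|G(\cdot,t)|\|_{L^r}\lesssim_{C,r}\|G\|_{L^r_{x,t}}$. Undoing the scaling gives the factor $B^{1/r}$.

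The only delicate points are measurability of $x\mapsto\sup_t|H(x,t)|$ and the identification with the continuous representative. For measurability, restrict the supremum to rational $t$, which is legitimate because each $G(x,\cdot)$ is continuous. The hypothesis $r<\infty$ enters only to make sense of the $L^r_{x,t}$ norm via Fubini and to make $G(x,\cdot)$ a tempered distribution; Hölder works for all $1\le r<\infty$, with $r'=\infty$ when $r=1$. I expect no real obstacle: the main step is the reproducing identity $G=G*_t\chi$ for a.e.\ $x$, which is the standard Paley–Wiener/Bernstein argument.
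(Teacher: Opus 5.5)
Your proposal is correct and is essentially the paper's argument: the paper also removes the modulation by $e^{-iat}$, reproduces $e^{-iat}H$ by convolution in $t$ with $K_B(t)=BK(Bt)$, where $\widehat K=1$ on $[-C,C]$, and then applies H\"older's inequality in $t$. The identity $\|K_B\|_{r'}=B^{1/r}\|K\|_{r'}$ there plays exactly the role of your rescaling to $B=1$, and your wish that $\widehat\chi$ equal one near the support is met by taking $\widehat\chi=1$ on $[-2C,2C]$.
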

\begin{proof}
Choose a Schwartz function $K$ with $\widehat K=1$ on $[-C,C]$, and set
$K_B(t)=BK(Bt)$. Then $e^{-iat}H=K_B*_t(e^{-iat}H)$, and H\"older's
inequality in $t$ gives
$\sup_t|H(x,t)|\le\|K_B\|_{r'}\|H(x,\cdot)\|_r$ for each $x$. Since
$\|K_B\|_{r'}=B^{1/r}\|K\|_{r'}$, the estimate follows.
\end{proof}
\begin{lemma}[Almost orthogonality]\label{lem:rectangles}
Let $\{R_a\}$ be a family of rectangles in the frequency space $\R^d$
whose dilates $CR_a$, for a fixed $C>1$, have bounded overlap. Let $P_a$ and $\widetilde P_a$ be
smooth Fourier multipliers supported in $CR_a$, with $\widetilde P_a=1$
on $R_a$, whose kernels are uniformly bounded in $L^1$. If, for each
$a$, the Fourier transform of $H_a$ in $x$ is supported in $R_a$, then
\begin{equation}\label{eq:synthesis}
 \Big\|\sum_aH_a\Big\|_{L^r_{x,t}}
 \lesssim_r\Big(\sum_a\|H_a\|_{L^r_{x,t}}^v\Big)^{1/v},
 \qquad v=\min\{r,r'\},\quad 1\le r\le\infty.
\end{equation}
Moreover, for $2\le p\le\infty$,
\begin{equation}\label{eq:analysis}
 \Big(\sum_a\|P_af\|_p^p\Big)^{1/p}\lesssim_p\|f\|_p.
\end{equation}
The constants may depend on the overlap bound and the uniform kernel
$L^1$ bounds, but not on the number, locations, or sizes of the rectangles.
\end{lemma}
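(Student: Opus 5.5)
For \eqref{eq:synthesis} I would interpolate between the endpoint exponents $r=1$, $r=2$ and $r=\infty$, applied to a suitable linear operator. Define
\[
 T\{F_a\}=\sum_a\widetilde P_aF_a ,
\]
where $\widetilde P_a$ acts in $x$ for each fixed $t$. Since $\widetilde P_a=1$ on $R_a$, we have $H_a=\widetilde P_aH_a$, so $\sum_aH_a=T\{H_a\}$. It therefore suffices to prove that $T$ is bounded from $\ell^v(L^r_{x,t})$ to $L^r_{x,t}$.

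The endpoint bounds are as follows. At $r=1$ and $r=\infty$ we have $v=1$, and the triangle inequality together with the uniform $L^1$ bound on the kernels of $\widetilde P_a$ gives
\[
 \Big\|\sum_a\widetilde P_aF_a\Big\|_r\lesssim\sum_a\|F_a\|_r .
\]
At $r=2$ we use Plancherel in $x$ for each fixed $t$. The functions $\widetilde P_aF_a$ have Fourier supports in $CR_a$, and these sets have bounded overlap, so
\[
 \Big\|\sum_a\widetilde P_aF_a\Big\|_2^2\lesssim\sum_a\|\widetilde P_aF_a\|_2^2\lesssim\sum_a\|F_a\|_2^2 ,
\]
where the second step uses $|\widehat{\widetilde P_a}|\lesssim1$, which follows from the $L^1$ kernel bound. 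Complex interpolation of the mixed spaces $\ell^v(L^r)$ then gives boundedness from $\ell^{r'}(L^r)$ for $1\le r\le2$ and from $\ell^{r}(L^r)$ for $2\le r\le\infty$, that is, from $\ell^v(L^r)$ in both ranges. The point of inserting $\widetilde P_a$ is that $T$ is defined on arbitrary sequences, not only on ones with the prescribed Fourier supports, so the interpolation is legitimate. The resulting constants depend only on the overlap bound and the kernel bounds.

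For \eqref{eq:analysis} I would interpolate the operator $f\mapsto\{P_af\}$ as a map from $L^p$ to $\ell^p(L^p)$.
\begin{itemize}
 \item At $p=\infty$, $\sup_a\|P_af\|_\infty\lesssim\|f\|_\infty$ by the kernel bound.
 \item At $p=2$, $\sum_a\|P_af\|_2^2\lesssim\|f\|_2^2$ by Plancherel and the bounded overlap of the supports $CR_a$.
\end{itemize}
Riesz--Thorin for $\ell^p(L^p)$-valued operators then gives the claim for $2\le p\le\infty$.

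The only delicate point is the interpolation step in \eqref{eq:synthesis}: a sequence of functions with constrained Fourier supports does not form an interpolation scale directly. Passing to the operator $T$ removes this issue. No step depends on the sizes or positions of the rectangles beyond the overlap constant and the uniform kernel bounds, which is why the constants are uniform as claimed.
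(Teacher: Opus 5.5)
Your argument is the paper's proof. It uses the same operator $(h_a)\mapsto\sum_a\widetilde P_ah_a$ with $H_a=\widetilde P_aH_a$, the same endpoints $\ell^1(L^1)$, $\ell^2(L^2)$, $\ell^1(L^\infty)$ with $t$ as a parameter, and the same interpolation between $L^2\to\ell^2(L^2)$ and $L^\infty\to\ell^\infty(L^\infty)$ for \eqref{eq:analysis}.

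There is one slip. Interpolating your endpoints gives $\ell^{r}(L^r)\to L^r$ for $1\le r\le2$ and $\ell^{r'}(L^r)\to L^r$ for $2\le r\le\infty$, which is the reverse of what you wrote. Your stated version is false, for instance $\ell^\infty(L^1)\to L^1$ at $r=1$ fails. The corrected statement is exactly boundedness from $\ell^v(L^r)$ with $v=\min\{r,r'\}$.
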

\begin{proof}
By Plancherel's theorem and the bounded overlap, the map
$(h_a)\mapsto\sum_a\widetilde P_ah_a$ is bounded from $\ell^2(L^2)$ to
$L^2$, and by the uniform kernel bounds it is bounded from
$\ell^1(L^r)$ to $L^r$ for $r=1,\infty$. Interpolation and the identity
$H_a=\widetilde P_aH_a$ give \eqref{eq:synthesis}; the multipliers act
only in $x$, so the variable $t$ is a parameter. Similarly,
\eqref{eq:analysis} follows by interpolation between the bounds
$L^2\to\ell^2(L^2)$ and $L^\infty\to\ell^\infty(L^\infty)$ for
$f\mapsto(P_af)_a$.
\end{proof}
\begin{lemma}[A summation lemma]\label{lem:power-sum}
Let $1<p,q<\infty$, and for $j\ge1$ let $T_j(t)$ be linear operators
depending on a parameter $t$. Suppose that, for all $j,k\ge1$,
\begin{equation}\label{eq:power-assumption}
 \|\sup_t|T_jf_j\, T_kf_k|\|_{q/2}
 \lesssim2^{-\delta|j-k|}\|f_j\|_p\|f_k\|_p
\end{equation}
for some $\delta>0$. If $m$ is an integer with $\max\{2,p\}\le m\le q$,
then
\begin{equation}\label{eq:power-conclusion}
 \Big\|\sup_t\Big|\sum_jT_jf_j\Big|\Big\|_q
 \lesssim\Big(\sum_j\|f_j\|_p^m\Big)^{1/m}.
\end{equation}
In particular, if $f_j=P_{2^j}f$ for a Littlewood--Paley decomposition
$\{P_{2^j}\}$, the right-hand side is bounded by $C_p\|f\|_p$.
\end{lemma}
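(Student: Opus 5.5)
Set $F=\sup_t|\sum_jT_jf_j|$ and $n=m$. Since $m\le q$, we have $\|F\|_q^m=\|F^m\|_{q/m}$ with $q/m\ge1$. Pointwise in $(x,t)$,
\[
 \Big|\sum_jT_jf_j\Big|^m\le\sum_{j_1,\dots,j_m}\prod_{i=1}^m|T_{j_i}f_{j_i}|
 \le m!\sum_{j_1\le j_2\le\dots\le j_m}\prod_{i=1}^m|T_{j_i}f_{j_i}|,
\]
after reordering each $m$-tuple. Taking the supremum in $t$ and then the $L^{q/m}$ norm, the triangle inequality (valid since $q/m\ge1$) gives
\[
 \|F\|_q^m\lesssim\sum_{j_1\le\dots\le j_m}\Big\|\sup_t\prod_i|T_{j_i}f_{j_i}|\Big\|_{q/m}.
\]
For each ordered tuple I will pair the smallest and largest indices, $j_1$ and $j_m$, and use the bilinear hypothesis \eqref{eq:power-assumption} on that pair. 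The middle factors $j_2,\dots,j_{m-1}$ are handled by the $m=2$ case of the hypothesis on the diagonal. Indeed, \eqref{eq:power-assumption} with $j=k$ reads $\|\sup_t|T_jf_j|\|_q^2\lesssim\|f_j\|_p^2$, a linear $L^p\to L^q$ bound.

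The key step is H\"older's inequality with exponents adapted to $q/m$:
\[
 \frac mq=\frac2q+\frac{m-2}q,
\]
so that
\[
 \Big\|\sup_t\prod_i|T_{j_i}f_{j_i}|\Big\|_{q/m}\le\big\|\sup_t|T_{j_1}f_{j_1}T_{j_m}f_{j_m}|\big\|_{q/2}\prod_{i=2}^{m-1}\|\sup_t|T_{j_i}f_{j_i}|\|_q.
\]
By \eqref{eq:power-assumption} this is
\[
 \lesssim2^{-\delta(j_m-j_1)}\prod_{i=1}^m\|f_{j_i}\|_p .
\]

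It then remains to prove the discrete inequality
\[
 \sum_{j_1\le\dots\le j_m}2^{-\delta(j_m-j_1)}\prod_i a_{j_i}\lesssim\sum_ja_j^m,
 \qquad a_j=\|f_j\|_p .
\]
Fix $j_1=j$ and $j_m=j+\ell$. There are at most $(\ell+1)^{m-2}$ choices of the middle indices, and by AM--GM each product satisfies $\prod_ia_{j_i}\le\frac1m\sum_i a_{j_i}^m$. This reduces the sum to
\[
 \sum_\ell(\ell+1)^{m-2}2^{-\delta\ell}\sum_j\max_{j\le k\le j+\ell}a_k^m\le\sum_\ell(\ell+1)^{m-1}2^{-\delta\ell}\sum_ka_k^m,
\]
which is finite because $\delta>0$. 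Taking $m$-th roots gives \eqref{eq:power-conclusion}.

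For the final assertion, $p\le m$ gives $\ell^p\subset\ell^m$, so $(\sum_j\|P_{2^j}f\|_p^m)^{1/m}\le(\sum_j\|P_{2^j}f\|_p^{\max(p,2)})^{1/\max(p,2)}$, using also $m\ge 2$. If $p\ge2$, I will bound $(\sum_j\|P_{2^j}f\|_p^p)^{1/p}\lesssim\|f\|_p$ by \eqref{eq:analysis} with annuli in place of rectangles, or directly via Littlewood--Paley and $\ell^2\subset\ell^p$ (Minkowski). If $p<2$, Littlewood--Paley gives $\|(\sum|P_{2^j}f|^2)^{1/2}\|_p\lesssim\|f\|_p$, and Minkowski with $2/p\ge1$ yields $(\sum_j\|P_{2^j}f\|_p^2)^{1/2}\lesssim\|f\|_p$, which suffices since $m\ge2$.

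The main obstacle I expect is the combinatorial bookkeeping in the discrete inequality, namely keeping the polynomial count of middle indices controlled by the exponential gain. That issue is benign here. The only place the constraint $\max\{2,p\}\le m\le q$ enters is the requirement $q/m\ge1$ in the triangle inequality, together with the embedding needed in the last step.
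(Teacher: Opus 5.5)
Your proposal is correct and follows the paper's proof essentially step by step. You expand the $m$-th power, order the indices, keep the two extreme factors at a common time inside the supremum, and apply H\"older with exponents $q/2$ and $q$ in $L^{q/m}$, which is a normed space since $m\le q$. The differences are cosmetic: you bound the discrete sum by fixing the endpoints and using AM--GM with a polynomial count of middle indices, whereas the paper fixes the gaps and uses H\"older for sequences. You also split the final Littlewood--Paley step into the cases $p\ge2$ and $p<2$, whereas the paper uses the single chain $(\sum_j\|f_j\|_p^m)^{1/m}\le\|(\sum_j|f_j|^m)^{1/m}\|_p\le\|(\sum_j|f_j|^2)^{1/2}\|_p$.
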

\begin{proof}
We may assume that only finitely many $f_j$ are nonzero. Taking $j=k$
in \eqref{eq:power-assumption} gives
$\|\sup_t|T_jf_j|\|_q\lesssim\|f_j\|_p$. For fixed $x$ and $t$ we
expand $(\sum_j|T_jf_j|)^m$ and order the indices; taking the supremum
in $t$ while keeping the two extreme factors at the same time, we get
\[
 \sup_t\Big|\sum_jT_jf_j\Big|^m
 \le m!\sum_{j_1\le\cdots\le j_m}
 \sup_t|T_{j_1}f_{j_1}\,T_{j_m}f_{j_m}|
 \prod_{1<i<m}\sup_t|T_{j_i}f_{j_i}|.
\]
We take the $L^{q/m}$ norm, which is a norm since $m\le q$, and use
H\"older's inequality with exponents $q/2$ and $q$, together with
\eqref{eq:power-assumption}. With $a_j=\|f_j\|_p$ this gives
\[
 \Big\|\sup_t\Big|\sum_jT_jf_j\Big|\Big\|_q^m
 \lesssim_m\sum_{j_1\le\cdots\le j_m}
       2^{-\delta(j_m-j_1)}\prod_{i=1}^m a_{j_i}.
\]
For fixed gaps $u_i=j_{i+1}-j_i\ge0$, H\"older's inequality for
sequences bounds the sum over $j_1$ by $\sum_ja_j^m$, and summing
$2^{-\delta(u_1+\cdots+u_{m-1})}$ over the gaps gives
$(1-2^{-\delta})^{-(m-1)}$. This proves \eqref{eq:power-conclusion}.
Finally, since $m\ge p$ and $m\ge2$, Minkowski's inequality and the
Littlewood--Paley inequality give
\[
 \Big(\sum_j\|f_j\|_p^m\Big)^{1/m}
 \le\Big\|\Big(\sum_j|f_j|^m\Big)^{1/m}\Big\|_p
 \le\Big\|\Big(\sum_j|f_j|^2\Big)^{1/2}\Big\|_p\lesssim_p\|f\|_p.
 \qedhere
\]
\end{proof}

\begin{remark}[Linearization]\label{rem:linearization}
When we interpolate estimates for maximal operators, we fix a
measurable function $x\mapsto t(x)$ with values in the relevant
interval and interpolate the linear or bilinear operators obtained by
evaluating at $t=t(x)$. Since all bounds are uniform in the choice of
$t(\cdot)$, the interpolated estimate holds for the maximal operator.
Indeed, for frequency localized Schwartz inputs, one may first take
the maximum over a finite set of times and choose a measurable
maximizing time. Continuity in $t$ then recovers the supremum by
increasing finite subsets of a countable dense set.
We use this reduction without further comment.
\end{remark}

\begin{remark}[Signs]\label{rem:signs}
Let $U_\pm(t)=e^{\pm it|D|}$, so that $U=U_+$. Since $\overline{U_-(t)g}=U_+(t)\bar g$ and
$\widehat{\bar g}(\xi)=\overline{\widehat g(-\xi)}$, we have
$|U_-(t)g|=|U_+(t)\bar g|$, where the Fourier transform of $\bar g$ is
supported in the same annulus as that of $g$ and
$\|\bar g\|_p=\|g\|_p$. The same holds with $U_\pm(t)$ replaced by
$w(t)U_\pm(t)$ for a real-valued function $w$. Hence, for maximal
estimates of products $|U_\pm(t)f\,U_\pm(t)g|$, with any choice of
the two signs, it suffices to consider the case in which both signs
are $+$.
\end{remark}

\section{Bilinear restriction for the cone}\label{sec:cone}
In this section we state Tao's bilinear restriction theorem for the
cone and rescale it to obtain an estimate for separated caps with
explicit dependence on the angle and on both frequencies
(\cref{lem:cone-restriction}). We use Tao's theorem in its scalar
form. For $k\in\Z$ let
\[
 \Sigma_k^\pm=\{(\xi,\pm|\xi|):
       2^k\le|\xi|\le2^{k+1},\ \angle(\xi,e_1)\le\pi/8\}.
\]
Following \cite{Tao2001}, a \emph{red wave of frequency $2^k$} is a
function on $\R^{d+1}$ whose space-time Fourier transform is given by
an $L^2$ density on $\Sigma_k^+$, parametrized by $\xi$; a \emph{blue
wave} is defined in the same way with $\Sigma_k^-$ in place of
$\Sigma_k^+$. The energy of such a wave $\phi$ is
$E(\phi)=\|\phi(\cdot,t)\|_2^2$, which is independent of $t$.
\begin{citedtheorem}[Tao {\cite[Theorem~1.1]{Tao2001}}]
Let $d\ge2$, let $\phi$ be a red wave of frequency $1$, and let $\psi$
be a blue wave of frequency $2^k$, $k\ge0$. Then, for
$(d+3)/(d+1)\le q\le2$ and $\eps>0$,
\begin{equation}\label{eq:tao-red-blue}
 \|\phi\psi\|_{L^q(\R^{d+1})}
 \lesssim_{d,q,\eps}
 2^{k(1/q-1/2+\eps)}E(\phi)^{1/2}E(\psi)^{1/2}.
\end{equation}
\end{citedtheorem}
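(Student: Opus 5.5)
This is Tao's theorem \cite[Theorem~1.1]{Tao2001}, and we use it as a black box. Still, here is the route we would take to prove it, following Wolff's induction on scales \cite{Wolff2001} as adapted by Tao to waves of different frequencies.

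\emph{Reduction to the endpoint.} First we reduce to the endpoint $q=(d+3)/(d+1)$. At $q=2$ there is an $L^2$ bilinear bound. By Plancherel, $\|\phi\psi\|_{L^2(\R^{d+1})}^2$ is controlled by the supremum over $(\xi,\tau)$ of the measure of the intersection of $\Sigma_0^+$ with a translate of $-\Sigma_k^-$. This measure is computed with the densities parametrized by $\xi$. Since the red and blue cones are transversal, the intersection is a codimension-one subset of $\Sigma_0^+$ cut out transversally. The Jacobian of the map $(\xi,\eta)\mapsto\xi+\eta$ restricted to the cone is bounded below uniformly in $k$. This should give $\|\phi\psi\|_2\lesssim E(\phi)^{1/2}E(\psi)^{1/2}$, consistent with the factor $2^{k(1/q-1/2)}=1$ at $q=2$. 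Interpolating this bound with the endpoint bound, which carries the factor $2^{k((d-1)/(2(d+3))+\eps)}$, gives the whole range. So it suffices to prove
\[
 \|\phi\psi\|_{L^{(d+3)/(d+1)}(Q_R)}\lesssim_\eps R^{\eps}2^{k(\frac{d+1}{d+3}-\frac12+\eps)}E(\phi)^{1/2}E(\psi)^{1/2}
\]
on space-time cubes $Q_R$ of side $R$. Here the $R^\eps$ loss is removed at the end by Tao's $\eps$-removal/localization argument, which uses the rapid decay of wave packets outside their tubes.

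\emph{Induction on scales.} We would induct on $R$, with the statement normalized so that the constant $K(R)$ satisfies $K(R)\le C_\eps R^\eps$. At scale $R$ we decompose $\phi$ into wave packets adapted to frequency $1$. These are tubes of dimensions $R^{1/2}\times\cdots\times R^{1/2}\times R$ along null directions of the red cone. We decompose $\psi$ into wave packets adapted to frequency $2^k$. Parabolic rescaling for frequency $2^k$ makes these tubes thinner, with cross-section comparable to $(R/2^k)^{1/2}$, or else they are grouped into coarser packets at the same angular scale. Next we cover $Q_R$ by subcubes of side $R^{1-\delta}$. For each subcube we split each wave into the packets that concentrate there (the ``local'' part, handled by the induction hypothesis at scale $R^{1-\delta}$) and the rest (the ``global'' part). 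The decisive geometric input is Wolff's/Tao's bush and hairbrush analysis. Red and blue tubes are transversal and lie on light cones, and a blue tube meets the light cones through red tubes in a controlled way. This yields an $L^2$ estimate of the form $\|\phi_{\rm glob}\psi\|_{L^2(Q)}\lesssim R^{-(d-1)/4+C\delta}\cdots$ via a combinatorial Kakeya-type count of tube incidences. Interpolating this $L^2$ bound with the trivial $L^1$ bound at the exponent $(d+3)/(d+1)$ closes the induction. It gives $K(R)\le(1+C R^{-c\delta})K(R^{1-\delta})+C_\delta$, hence $K(R)\lesssim_\eps R^\eps$.

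\emph{Main obstacle.} The step we expect to be the main obstacle is tracking the frequency ratio in the incidence estimate, so that the only loss is $2^{k(1/q-1/2+\eps)}$. The high-frequency packets are thinner than the low-frequency ones, so a single red tube meets many more blue tubes, and the counting in the hairbrush argument has to be weighted by $2^k$. We would handle this by rescaling the blue wave to frequency $1$ on a larger cube. We would then run the equal-frequency argument with an extra localization of $\phi$ to $2^{-k/2}$-caps, and use the $L^2$ estimate above to pay for the mismatch. This mismatch should produce exactly the power $2^{k(1/q-1/2)}$.
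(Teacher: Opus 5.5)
Using the theorem as a black box is what the paper does; it gives no proof and only cites Tao. Your preliminary steps are also sound. The Plancherel argument gives $\|\phi\psi\|_{L^2}\lesssim E(\phi)^{1/2}E(\psi)^{1/2}$ uniformly in $k$, since $|\nabla_\xi(|\xi|-|\zeta-\xi|)|\gtrsim1$ on the supports. H\"older's inequality (log-convexity of $L^q$ norms) then reduces the full range to $q=(d+3)/(d+1)$.

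Your sketch of that endpoint, however, has a genuine gap. You prove a local bound on $Q_R$ with an $R^{\eps}$ loss and then invoke $\eps$-removal. That lemma turns local $L^p$ bounds with $R^{\eps}$ losses into global bounds only for exponents $q>p$; it never returns $q=p$. So your route gives at most the open range $(d+3)/(d+1)<q\le2$. The closed endpoint is part of the statement and is the main point of Tao's paper, and your route does not reach it.

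The induction step is also internally inconsistent. A recursion $K(R)\le(1+CR^{-c\delta})K(R^{1-\delta})+C_\delta$ would give $K(R)\lesssim_\delta1$ directly, with no $\eps$-removal needed. But the step you describe does not produce it. Interpolate $\|\phi_{\mathrm{glob}}\psi\|_{L^2(Q)}\lesssim R^{-(d-1)/4+C\delta}$ with the trivial bound $\|\phi_{\mathrm{glob}}\psi\|_{L^1(Q)}\lesssim R\,E(\phi)^{1/2}E(\psi)^{1/2}$ at $q=(d+3)/(d+1)$, with weight $(d-1)/(d+3)$ on the $L^1$ bound. This gives $R^{4C\delta/(d+3)}$, so the additive term is $C_\delta R^{C'\delta}$ and iteration yields only $R^{\eps}$ losses.

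Closing the induction at the endpoint requires two things: each step may lose only a factor $1+O(R^{-c})$, so that the losses form a convergent product, and the error term must stay bounded. Arranging this needs a much finer decomposition of the waves relative to the subcubes. That is the new ingredient in Tao's argument, and your sketch asserts the resulting recursion without supplying it.

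For this paper the gap is harmless. As noted in \cref{rem:candy}, \eqref{eq:cone-restriction} is only used at exponents in the interior of the range and at $q=2$, so the endpoint itself is never needed. At $q=2$ your Plancherel argument already suffices.
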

As in \cite[Lemma~2.5]{Lee2003}, rescaling gives the dependence on
the angle between the caps and on both frequencies.
\begin{lemma}[Bilinear restriction for separated caps]\label{lem:cone-restriction}
Let $d\ge2$ and $\lambda\ge\mu>0$. Let $0<\theta\lesssim1$, and let
$c_0>0$ be a sufficiently small constant. Suppose that $\widehat f$ and
$\widehat g$ are supported in $\{|\xi|\simeq\lambda\}$ and
$\{|\xi|\simeq\mu\}$, respectively, and in caps of width at most
$c_0\theta$ whose centers are at an angle $\simeq\theta$. Then, for
$(d+3)/(d+1)\le q\le2$ and $\eps>0$,
\begin{equation}\label{eq:cone-restriction}
 \|Uf\,Ug\|_{L^q_{x,t}}
 \lesssim_{d,q,\eps}\theta^{d-1-(d+1)/q}\mu^{d-(d+1)/q}
       (\lambda/\mu)^{1/q-1/2+\eps}\|f\|_2\|g\|_2.
\end{equation}
\end{lemma}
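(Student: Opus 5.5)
The plan is to reduce to Tao's theorem in three moves: scale, apply a Lorentz transformation that opens the angle $\theta$ to $\simeq1$, and track the Jacobians. First we scale by $\mu$. Set $f^\mu(x)=f(x/\mu)$, and similarly for $g$. Then $Uf(x,t)=Uf^{\mu}(\mu x,\mu t)$, where $f^\mu$ now has frequency $\lambda/\mu$ and $g^\mu$ has frequency $\simeq1$. This gives
\[
 \|Uf\,Ug\|_{L^q}=\mu^{-(d+1)/q}\|Uf^\mu\,Ug^\mu\|_{L^q},\qquad \|f^\mu\|_2\|g^\mu\|_2=\mu^{d}\|f\|_2\|g\|_2,
\]
which produces the factor $\mu^{d-(d+1)/q}$. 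So it suffices to treat $\mu=1$ and $\lambda\ge1$. For $\theta\simeq1$ the estimate is then exactly Tao's theorem. To match his setting we rotate, we decompose into finitely many subcaps of angular width $\le\pi/8$, and we split the annuli dyadically into boundedly many pieces. Tao's theorem requires one red wave and one blue wave. We can arrange this by applying the reflection $(x,t)\mapsto(x,-t)$ to one factor only, through \cref{rem:signs}: $|Ug|=|U_-\bar g|$, and the cap of $\bar g$ is the antipodal cap. The resulting angular configuration stays admissible after a rotation and a finite decomposition. The high-frequency wave then takes the role of the wave of frequency $2^k$, with $2^k\simeq\lambda$.

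For small $\theta$ we use a Lorentz transformation $L$ of $\R^{d+1}$ that maps the light cone to itself. We choose $L$ to take the two null directions $(\omega_1,1)$ and $(\omega_2,1)$, at angle $\simeq\theta$, to a pair at angle $\simeq1$. In suitable null coordinates $u=t+x_1$, $v=t-x_1$, $x'$, the map is the composition of the boost $(u,v)\mapsto(\theta^{-1}u,\theta v)$ with the scaling $x'\mapsto x'$. This boost is volume preserving, and it sends a cap of angle $c_0\theta$ around an axis at distance $\theta$ from $e_1$ to a cap of angle $O(c_0)$ at unit separation. A quicker route is to apply the parabolic-type rescaling used in \cite[Lemma~2.5]{Lee2003}, namely $\xi_1\mapsto\xi_1$, $\xi'\mapsto\theta^{-1}\xi'$, together with the boost. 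This maps the frequency-$1$ and frequency-$\lambda$ pieces of the cone near $e_1$ to pieces of the cone with frequencies $\simeq\theta^{-2}$ and $\simeq\lambda\theta^{-2}$; a further dilation by $\theta^2$ restores the frequencies $1$ and $\lambda$, so the ratio $\lambda/\mu$ is unchanged. This is why the factor $(\lambda/\mu)^{1/q-1/2+\eps}$ survives without $\theta$-dependence.

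The exponents are then pure bookkeeping. The Fourier-side map has Jacobian $\simeq\theta^{-(d-1)}$ in $\xi'$ at fixed frequency, up to the final dilation. The energy $\|f\|_2^2$ is a density on the cone parametrized by $\xi$, and the measure $d\xi/|\xi|$ on the cone is Lorentz invariant, so each $L^2$ norm changes by a factor $\theta^{-(d-1)/2}$, which gives $\theta^{-(d-1)}$ for the product. On the physical side the $L^q_{x,t}$ norm of the product changes by the Jacobian to the power $1/q$, that is $\theta^{(d+1)/q}$ in the inverse direction. Since the estimate must be homogeneous, the check is: at $q=2$ the exponent is $\theta^{(d-3)/2}$. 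This agrees with the known sharp $L^2$ bilinear estimate for transverse caps, $\theta^{(d-3)/2}$, once it is combined with the width $\theta$ caps. Then $\theta^{d-1-(d+1)/q}$ follows by matching the powers. The main obstacle is this Jacobian bookkeeping. In particular we must make sure that the weights $|\xi|$ in the Lorentz-invariant measure contribute only factors $\lambda^{0}$, since they are comparable on each piece, and that after the transformation the caps really lie in the sectors $\Sigma^\pm$ with the correct frequencies. We will verify this by computing the boost explicitly on $\{|\xi|\simeq\lambda\}$ and on $\{|\xi|\simeq1\}$. Finally, we absorb the finitely many dyadic pieces created by the transformation using the triangle inequality.
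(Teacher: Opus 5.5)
Your strategy is the paper's: a Lorentz map that opens the angle, a scaling, conjugation to a red/blue pair, and Tao's theorem. However, the part of the lemma that goes beyond Tao's theorem, the exponent $\theta^{d-1-(d+1)/q}$, is not actually proved.

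\textbf{The exponent is not derived.} The factors you list, $\theta^{\pm(d+1)/q}$ from the physical Jacobian and $\theta^{\pm(d-1)/2}$ for each $L^2$ norm, belong to the boost composed with a dilation by $\theta^{-1}$, which keeps the frequencies fixed. They do not belong to the volume-preserving boost you describe. For a pure boost the Jacobian is one, all frequencies on both caps are multiplied by $\simeq\theta$, and invariance of $d\xi/|\xi|$ gives $\|\widetilde f\|_2\simeq\theta^{-1/2}\|f\|_2$. Your account of the frequencies ($\theta^{-2}$, then a dilation by $\theta^2$) fits neither map. The signs of the exponents also depend on whether the factor $a^{d-1}$ produced by the substitution $z=aZ$ in the extension integral is absorbed into $\widetilde f$, and that is exactly what decides the final power. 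You then obtain the exponent from a check at $q=2$ and by ``matching the powers'', which cannot determine a $q$-dependent exponent.

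You need to fix one map and compute. The paper takes $A:(u,v,z)\mapsto(u,a^{-2}v,a^{-1}z)$ with $a=\tan(\delta/4)$, so $|\det A|=a^{-(d+1)}$ and the frequencies are unchanged. This gives $Uf(A^T(y,s))=a^{d-1}U\widetilde f(y,s)$ with $\|\widetilde f\|_2\simeq a^{-(d-1)/2}\|f\|_2$. Hence $\|Uf\,Ug\|_q=a^{2(d-1)-(d+1)/q}\|U\widetilde f\,U\widetilde g\|_q$, and Tao's bound yields $a^{2(d-1)-(d+1)/q}a^{-(d-1)}=a^{d-1-(d+1)/q}$. A pure boost also works if done consistently: the angle-one estimate at frequencies $\simeq\theta\lambda,\theta\mu$ gives $(\theta\mu)^{d-(d+1)/q}\theta^{-1}$.

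\textbf{The case $\theta\simeq1$ is not ``exactly Tao's theorem'' after conjugation, rotation and subdivision.} Conjugating $Ug$ gives a blue wave with spatial directions near $-\omega_2$. Tao's theorem needs the red and blue waves to lie within $\pi/8$ of a common axis, so the two forward-cone caps must be nearly antipodal. For caps at angle $\pi/2$, say, no rotation or finite subdivision achieves this, so a Lorentz transformation is needed here too. Your small-$\theta$ step only produces a pair ``at angle $\simeq1$'' and then relies on this case, so the gap propagates. The paper avoids the issue by placing the centers at $(\cos(\delta/2),\pm\sin(\delta/2),0,\dots,0)$. Then $A$ sends them to exactly opposite directions for every $\theta\lesssim1$ at once, and only then does conjugating the high-frequency wave produce a red/blue pair supported in $\Sigma^\pm$.
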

\begin{proof}
Let $\delta\simeq\theta$ be the angle between the centers of the caps.
After a rotation we may assume that the centers are
\[
 (\cos(\delta/2),\pm\sin(\delta/2),0,\ldots,0).
\]
On the forward cone we use the null coordinates
\[
 u=\frac{\tau+\xi_1}{2},\qquad
 v=\frac{\tau-\xi_1}{2},\qquad z=\xi',\qquad 4uv=|z|^2.
\]
Let $a=\tan(\delta/4)\simeq\theta$, and let $A$ be the linear map of
space-time frequency space given by
\[
 (u,v,z)\longmapsto (u,a^{-2}v,a^{-1}z).
\]
The map $A$ preserves the forward cone and has determinant
$a^{-(d+1)}$. At the centers of the caps we have $z/(2u)=\pm ae_1'$,
and after applying $A$ this ratio becomes $\pm e_1'$. Hence the images
of the two caps have opposite spatial directions and width $O(c_0)$,
and their radial frequencies remain comparable to $\lambda$ and $\mu$,
uniformly in $\theta$.

In these coordinates
\[
 \xi=\left(u-\frac{|z|^2}{4u},z\right),\qquad
 d\xi=J(u,z)\,du\,dz,\qquad
 J(u,z)=1+\frac{|z|^2}{4u^2}.
\]
Write $Z=z/a$ and $\Xi=(u-|Z|^2/(4u),Z)$, and define $\widetilde f$ by
\[
 \widehat{\widetilde f}(\Xi)
   =\frac{J(u,aZ)}{J(u,Z)}
      \widehat f\left(u-\frac{a^2|Z|^2}{4u},aZ\right),
\]
and $\widetilde g$ similarly. Both Jacobian factors are bounded above
and below on the supports. Changing variables in the extension
integral, we obtain
\begin{equation}
 Uf\big(A^T(y,s)\big)=a^{d-1}U\widetilde f(y,s),
 \qquad
 \|\widetilde f\|_2\lesssim a^{-(d-1)/2}\|f\|_2,
 \label{eq:cone-data-rescaling}
 \end{equation}
 and
 \begin{equation}
 \|Uf\,Ug\|_q
   =a^{2(d-1)-(d+1)/q}
       \|U\widetilde f\,U\widetilde g\|_q,
 \label{eq:cone-product-rescaling}
\end{equation}
where $A$ is identified with its matrix in the coordinates
$(\xi,\tau)$.

After a rotation, the transformed low-frequency cap is centered at
$e_1$ and the high-frequency cap at $-e_1$. Replacing the
high-frequency wave by its complex conjugate, which does not change
the modulus of the product, we obtain a blue wave with spatial
direction $e_1$, while the low-frequency wave is red. For $c_0$
sufficiently small, both apertures are at most $\pi/8$. We decompose each Fourier
support into its intersections with the dyadic annuli
$\{2^j\le|\xi|\le2^{j+1}\}$, $j\in\Z$; each support meets a bounded
number of them. For a pair of pieces with indices
$j_{\mathrm{lo}}\le j_{\mathrm{hi}}$, if the piece of the original
high-frequency wave lies in the smaller annulus, then
$\lambda\simeq\mu$, and conjugating both waves and applying a common
rotation we may again assume that the wave of smaller frequency is
red. Scaling by $2^{-j_{\mathrm{lo}}}$ reduces the pair to a red wave
of frequency $1$ and a blue wave of frequency $2^k$ with
\[
 k=j_{\mathrm{hi}}-j_{\mathrm{lo}}\ge0,\qquad
 2^{j_{\mathrm{lo}}}\simeq\mu,\qquad 2^k\simeq\lambda/\mu,
\]
and, for $L^2$ normalized data, the scaling produces the factor
$2^{j_{\mathrm{lo}}(d-(d+1)/q)}\simeq\mu^{d-(d+1)/q}$. By
\eqref{eq:tao-red-blue}, summed over the bounded number of pieces,
\[
 \|U\widetilde f\,U\widetilde g\|_q
 \lesssim_{d,q,\eps}
 \mu^{d-(d+1)/q}(\lambda/\mu)^{1/q-1/2+\eps}
 \|\widetilde f\|_2\|\widetilde g\|_2.
\]
Combining this with \eqref{eq:cone-data-rescaling} and
\eqref{eq:cone-product-rescaling}, we obtain the angular factor
\[
 a^{2(d-1)-(d+1)/q}a^{-(d-1)}=a^{d-1-(d+1)/q},
\]
which proves \eqref{eq:cone-restriction}. Note that the $\eps$-loss appears only in
the frequency ratio.
\end{proof}
\begin{remark}\label{rem:candy}
For $(d+3)/(d+1)<q\le2$, the factor $(\lambda/\mu)^\eps$ in
\eqref{eq:cone-restriction} can be removed by a theorem of Candy
\cite[Theorem~1.10]{Candy2019}. Indeed, in that theorem we take both
masses equal to zero, $\alpha\simeq\theta$, $\beta=1$, and equal
space and time exponents $q$. The transversality parameter is
\[
 \left(\frac{|\xi_0||\zeta_0|-\xi_0\cdot\zeta_0}
                  {\lambda\mu}\right)^{1/2}\simeq\theta,
\]
and the condition on the exponents is $(d+3)/(2q)<(d+1)/2$. The
support conditions are satisfied after a fixed finite radial
subdivision and by the choice of $c_0$, and the resulting bound is
\[\theta^{d-1-(d+1)/q}\mu^{d-(d+1)/q}(\lambda/\mu)^{1/q-1/2}.\] We do
not need this refinement: below we use \eqref{eq:cone-restriction}
only for one exponent $q_0$ in the interior of the range when $d\ge3$,
and for some $q\in(19/10,2)$ and for $q=2$ when $d=2$ (this exponent
is denoted by $s$ in \cref{sec:planar}), with $\eps>0$ small.
\end{remark}
\section{The bilinear wave maximal estimate}\label{sec:bilinear}\label{sec:higher}
In this section we state the bilinear estimate with decay in the
frequency ratio that underlies both main theorems, and prove it for
$d\ge3$.

Let
\[
 \gamma_d^*=\begin{cases}1/12,&d=2,\\1/(d+1),&d\ge3.\end{cases}
\]
\begin{proposition}\label{prop:wave-gap}
Let $d\ge2$, $0<\gamma_d<\gamma_d^*$, $\lambda\ge\mu\ge2$, and
$\rho=\mu/\lambda$. For inputs $f_\lambda,g_\mu$ supported in the
corresponding frequency annuli,
\begin{equation}\label{eq:wave-gap-local}
 \left\|\sup_{1\le t\le2}|U(t)f_\lambda\,U(t)g_\mu|\right\|_{r_d}
 \lesssim_{d,\gamma_d}(\lambda\mu)^{s_c}\rho^{\gamma_d}
 \|f_\lambda\|_2\|g_\mu\|_2,
\end{equation}
where $r_d=(d+1)/(d-1)$ and $s_c=d/(d+1)$.
\end{proposition}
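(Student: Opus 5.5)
We prove \eqref{eq:wave-gap-local} for $d\ge3$ following \cite[Section~2]{Lee2003}: first a space-time $L^{r_d}$ bound with the frequency-ratio gain, then the supremum in $t$ via \cref{lem:modulated-bernstein}.

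\emph{Step 1: angular decomposition.} Multiply by a smooth cutoff $\chi(t)$ equal to one on $[1,2]$. Whitney-decompose $\Sph^{d-1}\times\Sph^{d-1}$ away from the diagonal. This writes $U f_\lambda\,U g_\mu$ as a sum over dyadic $\theta\in(\mu^{-1},1]$ and over pairs of caps $(\kappa,\kappa')$ of width $c_0\theta$ with angle $\simeq\theta$. A remaining near-diagonal term has angle $\lesssim\mu^{-1}$ and is handled directly by the trivial bound below.

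For a single pair we interpolate two estimates:
\begin{itemize}
\item \cref{lem:cone-restriction} at an exponent $q_0\in((d+3)/(d+1),2)$;
\item the trivial bound $\|Uf\,Ug\|_\infty\le\|\widehat f\|_1\|\widehat g\|_1\lesssim(\lambda^d\theta^{d-1})^{1/2}(\mu^d\theta^{d-1})^{1/2}\|f\|_2\|g\|_2$.
\end{itemize}
We interpolate to the exponent $r_d$, which lies between $q_0$ and $\infty$ since $r_d\le 2$ for $d\ge3$. The powers of $\lambda$ from the second bound should combine with $(\lambda/\mu)^{1/q_0-1/2+\eps}$ into $(\lambda\mu)^{s_c}$ times a negative power of $\rho^{-1}$, i.e.\ a positive power of $\rho$. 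The power of $\theta$ has some exponent $\beta$.

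\emph{Step 2: summing caps at fixed $\theta$.} For fixed $\theta$, the products $Uf_\kappa\,Ug_{\kappa'}$ have spatial Fourier supports in essentially disjoint rectangles of dimensions roughly $\lambda\times(\lambda\theta)^{d-1}$. This holds after also cutting the high annulus into radial pieces so that the sums $\kappa+\kappa'$ are separated. Then \cref{lem:rectangles}, with $v=r_d'$ or $r_d$, and \eqref{eq:analysis}/orthogonality give an $\ell^v$ sum of the pair bounds. We aim for a bound in $\ell^{r_d}$ of the pair norms, using $L^2$ orthogonality of the caps.

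\emph{Step 3: summing the angular scales.} This is the main obstacle, because at the critical exponent the power $\theta^\beta$ is expected to be $\theta^0$, so a naive sum over $\theta$ loses $\log$. I would use the square summation \cref{lem:angular-sum} (real interpolation $(L^r,L^{q_*})_{\vartheta,2}=L^2$). Concretely, one proves the fixed-$\theta$ estimate at two exponents $r<r_d<q_*$, with powers $\theta^{\pm c}$ of opposite signs, keeping the $\rho$-gain in both. The lemma then sums over $\theta$ without loss.

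\emph{Step 4: supremum in $t$.} Decompose $f_\lambda$ into shells $\{|\xi|\in[\lambda+\ell\mu,\lambda+(\ell+1)\mu)\}$. For each shell $f^\ell$, the $t$-Fourier transform of $\chi(t)U f^\ell\,U g_\mu$ lies in an interval of length $O(\mu)$ centered at $\lambda+\ell\mu$. \cref{lem:modulated-bernstein} then gives $\sup_t$ at cost $\mu^{1/r_d}$. The pieces $\ell$ have almost disjoint spatial frequency supports (annuli of width $\mu$), so \cref{lem:rectangles} together with Step 3 applied to each shell recombines them.

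The cost $\mu^{1/r_d}=\mu^{(d-1)/(d+1)}$ must be absorbed. The space-time estimate should therefore be proved with the sharper power $\lambda^{s_c}\mu^{s_c-(d-1)/(d+1)}\rho^{\gamma}$, which is what the exponent bookkeeping in Step 1 yields. The restriction $\gamma_d<1/(d+1)$ should arise from the $\eps$ loss and from the choice of $q_0$ near the endpoint.
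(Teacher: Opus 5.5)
You have the right architecture for $d\ge3$, but Step 3, the crux of the argument, is set up so that it cannot work.

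In Step 1 you turn the trivial bound into an $L^2$ bound by Cauchy--Schwarz. After that, every angular scale gives $(\lambda\mu)^{1/2}\rho^{\widetilde\kappa}\|f\|_2\|g\|_2$ with exactly $\theta^0$, because $d-1-(d+1)/r_d=0$. Summing the $\sim\log\mu$ scales then costs a logarithm, which is fatal when $\lambda\simeq\mu$. Your remedy, fixed-$\theta$ bounds at two \emph{output} exponents $r<r_d<q_*$ with $\theta^{\pm c}$, is Bourgain's summation trick. With the inputs fixed in $L^2$ it yields only a weak-type $L^{r_d,\infty}$ bound.

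\Cref{lem:angular-sum} is a statement about the \emph{inputs}. There $r<2<q_*$ are exponents for $\widehat f$ and $\widehat g$ on the sectors, and the lemma applies only if the fixed-scale bound factors as $\mathfrak x_j(\widehat f;\lambda)\,\mathfrak x_j(\widehat g;\mu)$. To get this:
\begin{enumerate}
\item Keep the trivial bound in the form $\|\widehat f_\omega\|_1\|\widehat g_{\omega'}\|_1$. Interpolate it with Tao's bound at $q_0=\beta r_d$, with weight $\beta=1-2\chi$ on the latter, so that the inputs are measured in $L^r$ on the Fourier side with $1/r=1/2+\chi$.
\item Write the result in terms of $a_\omega=|\mathfrak a_{\lambda,\omega}|^{-\chi}\|\widehat f\|_{L^r(\mathfrak a_{\lambda,\omega})}$ and the analogous $b_{\omega'}$. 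The power of $\theta$ then vanishes; this is \eqref{eq:highdim-cap}.
\item Apply \cref{lem:rectangles} with $v=r_d$ and Cauchy--Schwarz over the pairs. This bounds each scale by the $\ell^{2r_d}$ sums of the $a_\omega$ and of the $b_{\omega'}$, hence by $\mathfrak x_j(\widehat f;\lambda)\,\mathfrak x_j(\widehat g;\mu)$ with $q_*=2r_d$.
\item The identity $(L^r,L^{2r_d})_{\vartheta,2}=L^2$ makes each factor square summable in $j$, and Cauchy--Schwarz in $j$ finishes.
\end{enumerate}
This handles all scales $j\ge0$, so no near-diagonal term remains. Your near-diagonal term is not controlled as stated in any case: an $L^\infty_{x,t}$ bound gives no $L^{r_d}$ bound over $\R^d$.

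Step 4 is also incorrect. Once you take $\sup_t$ of each shell, the functions $\sup_t|Uf^\ell\,Ug_\mu|$ carry no Fourier support information, so \cref{lem:rectangles} cannot recombine them. Thin annuli are also not rectangles with uniformly bounded kernels. You must use the triangle inequality and Cauchy--Schwarz over the $O(\rho^{-1})$ shells, losing $\rho^{-1/2}$. Together with $\mu^{1/r_d}$ this turns $(\lambda\mu)^{1/2}\rho^{\widetilde\kappa}$ into $(\lambda\mu)^{s_c}\rho^{\widetilde\kappa-1/(d+1)}$. Since $\widetilde\kappa$ can be taken close to $2/(d+1)$, a gain survives. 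This loss, not $\eps$ or $q_0$, is the source of $\gamma_d^*=1/(d+1)$. A compactly supported time cutoff $\chi(t)$ would also destroy the compact $t$-Fourier support that \cref{lem:modulated-bernstein} needs; for $d\ge3$ all estimates are global in time and no cutoff is used.

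Finally, the proposition includes $d=2$, which you do not treat, and there your scheme gives no decay at all (\cref{rem:planar-obstruction}). Reaching $r_2=3$ from $q_0\le2$ forces $\chi\ge1/6$, leaving at best $\rho^{-1/6}$ after the radial step. The plane needs a genuinely new ingredient. The paper divides the high-frequency caps into subcaps of width $\rho^{1/2}\theta$. After a Lorentz rescaling, the products of these with the low-frequency wave have space-time Fourier support in $1\times\rho^{1/2}\times\rho$ sectors of the $O(\rho)$-neighbourhood of the cone, so Bourgain--Demeter decoupling applies (\cref{lem:product-decoupling}). The paper then combines:
\begin{enumerate}
\item a four-endpoint interpolation at exponents $p>2$;
\item Bourgain's summation followed by multilinear real interpolation, to get strong type;
\item separate arguments for the near-diagonal and comparable-frequency parts;
\item a final interpolation with $L^1\times L^1\to L^\infty$, which yields $\gamma_2<1/12$.
\end{enumerate}
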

By \cref{rem:signs}, the same estimate holds with $U(t)$ replaced by
$U_\pm(t)$ in either factor, with any choice of signs. For the proofs
of \cref{thm:main,thm:wave} it suffices that \eqref{eq:wave-gap-local}
holds for some $\gamma_d>0$; the range $0<\gamma_d<\gamma_d^*$ is what
our argument gives. The case $d=2$ is proved in \cref{sec:planar-l2}.

\subsection{Angular decomposition}
We fix the exponents used in the interpolation and decompose the pair
of input frequencies according to the angle between them. Let $d\ge3$
and $0<\gamma_d<\gamma_d^*$. We choose $\chi,\eps>0$ so small that
\begin{equation}\label{eq:highdim-choice}
 \chi<\frac2{(d+1)^2},\qquad
 (d+1)\chi+\eps<\gamma_d^*-\gamma_d,
\end{equation}
and set $1/r=1/2+\chi$, $\beta=1-2\chi$, and $q_0=\beta r_d$. Then
$(d+3)/(d+1)<q_0<2$ and $1<r<2<2r_d$.
When defining the angular pieces below, we write $f=f_\lambda$ and
$g=g_\mu$.

We use a Whitney decomposition of $\Sph^{d-1}\times\Sph^{d-1}$ away from
the diagonal. At the scale $\theta=2^{-j}$, $j\ge0$, it consists of
pairs $(\omega,\omega')$ of caps of width $\simeq\theta$ whose centers
are at distance $\simeq\theta$; each $\omega$ is paired with a bounded
number of caps $\omega'$, and conversely. We write $\omega\sim\omega'$
for such pairs. The decomposition is obtained from smooth dyadic
partitions of unity on the sphere, in finitely many coordinate charts,
by the usual telescoping argument, and each of its terms is a product
of a cutoff in $\xi$ and a cutoff in $\eta$. After a fixed finite
subdivision we may assume that the caps satisfy the hypothesis of
\cref{lem:cone-restriction}. Since the diagonal has measure zero in
$\Sph^{d-1}\times\Sph^{d-1}$, the decomposition involves all scales
$j\ge0$ and has no remaining near-diagonal term.

\subsection{A square summation lemma}
The following lemma allows us to sum over the angular scales.
Related angular summation estimates appear in Ramos's refinement of
the Strichartz inequality \cite{Ramos2012}. We give a short proof
based on real interpolation.

\begin{lemma}\label{lem:angular-sum}
Let $d\ge2$, $1<r<2<q_*<\infty$, $\ell>0$, and
$v_j=\ell^d2^{-(d-1)j}$ for $j\ge0$. Suppose that, for each $j$,
$\{\mathfrak a_{j,\omega}\}_\omega$ is a family of measurable sets in
$\R^d$ with $|\mathfrak a_{j,\omega}|\lesssim v_j$ and
$\sum_{\omega}\mathbf1_{\mathfrak a_{j,\omega}}\lesssim1$, uniformly in
$j$. For $h\in L^2(\R^d)$ set
\[
 \mathfrak x_j(h;\ell)=v_j^{1/2-1/r}
       \Big(\sum_{\omega}\|h\|_{L^r(\mathfrak a_{j,\omega})}^{q_*}\Big)^{1/q_*}.
\]
Then $\sum_{j\ge0}\mathfrak x_j(h;\ell)^2\lesssim\|h\|_2^2$, with a
constant independent of $\ell$.
\end{lemma}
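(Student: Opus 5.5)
The plan is to read the lemma as a real interpolation statement. The relevant identity is $(L^r,L^{q_*})_{\vartheta,2}=L^2$, where $1/2=(1-\vartheta)/r+\vartheta/q_*$. For $h\in L^r+L^{q_*}$ set
\[
 y_j(h)=\Big(\sum_\omega\|h\|_{L^r(\mathfrak a_{j,\omega})}^{q_*}\Big)^{1/q_*},
\]
so that $\mathfrak x_j(h;\ell)=v_j^{1/2-1/r}y_j(h)$. Each $y_j$ is a mixed $\ell^{q_*}(L^r)$ norm of the restrictions of $h$, so it is subadditive: $y_j(h_0+h_1)\le y_j(h_0)+y_j(h_1)$. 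This subadditivity is all that is needed to pass the $K$-functional through the map $h\mapsto(y_j(h))_j$.

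Step 1 consists of two endpoint bounds, uniform in $j$.

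\emph{Endpoint $L^r$.} Since $q_*>r$, we have $\ell^r\subset\ell^{q_*}$. Combined with the bounded overlap of the sets $\mathfrak a_{j,\omega}$ this gives
\[
 y_j(h)\le\Big(\sum_\omega\|h\|_{L^r(\mathfrak a_{j,\omega})}^r\Big)^{1/r}\lesssim\|h\|_r .
\]

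\emph{Endpoint $L^{q_*}$.} H\"older's inequality on each set, $|\mathfrak a_{j,\omega}|\lesssim v_j$, and bounded overlap give
\[
 y_j(h)\lesssim v_j^{1/r-1/q_*}\Big(\sum_\omega\|h\|_{L^{q_*}(\mathfrak a_{j,\omega})}^{q_*}\Big)^{1/q_*}\lesssim v_j^{1/r-1/q_*}\|h\|_{q_*} .
\]

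Thus $h\mapsto(y_j)$ maps $L^r\to\ell^\infty$ and $L^{q_*}\to\ell^\infty(w)$, where $w_j=v_j^{-(1/r-1/q_*)}$.

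Step 2 transfers the $K$-functional. Let $K(t,h)$ denote the $K$-functional of $h$ for the couple $(L^r,L^{q_*})$. For any splitting $h=h_0+h_1$, subadditivity and the two endpoint bounds give
\[
 \min(1,tw_j)\,y_j(h)\le\min(1,tw_j)\big(y_j(h_0)+y_j(h_1)\big)\lesssim\|h_0\|_r+t\|h_1\|_{q_*}.
\]
Taking the infimum over splittings, $\sup_j\min(1,tw_j)\,y_j(h)\lesssim K(t,h)$.

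Step 3 is the only point needing care: we must extract an $\ell^2$ sum with constants independent of $\ell$. Put $\alpha=1/r-1/q_*>0$. Since $v_j=\ell^d2^{-(d-1)j}$, the numbers $w_j^{-1}$ form a geometric progression with ratio $2^{(d-1)\alpha}>1$, and this ratio does not depend on $\ell$. Hence the intervals
\[
 I_j=\big[w_j^{-1},\,2^{(d-1)\alpha}w_j^{-1}\big)
\]
are pairwise disjoint, and on $I_j$ we have $\min(1,tw_j)=1$ and $t^{-\vartheta}\simeq w_j^{\vartheta}$. Using Step 2 on each $I_j$,
\[
 \sum_j w_j^{2\vartheta}y_j(h)^2\lesssim\sum_j\int_{I_j}\big(t^{-\vartheta}K(t,h)\big)^2\frac{dt}{t}\le\|h\|_{(L^r,L^{q_*})_{\vartheta,2}}^2\simeq\|h\|_2^2 .
\]
The implicit constants depend only on $r,q_*,d$; the factor $\ell^d$ only translates the progression and does not affect them.

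Finally, $\vartheta\alpha=(1-\vartheta)/r+\vartheta/r-\vartheta/q_*-\ldots$ simplifies as follows: from $1/2=(1-\vartheta)/r+\vartheta/q_*$ we get $\vartheta(1/r-1/q_*)=1/r-1/2$. Therefore
\[
 w_j^{\vartheta}=v_j^{-(1/r-1/2)}=v_j^{1/2-1/r},
\]
so $w_j^{\vartheta}y_j=\mathfrak x_j(h;\ell)$, and the display in Step 3 is the claim.
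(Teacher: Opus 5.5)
Your proposal is correct and follows essentially the same route as the paper. Both arguments combine the two endpoint bounds with subadditivity to dominate $v_j^{1/2-1/r}y_j(h)$ by $t^{-\vartheta}K(t,h)$ on the disjoint intervals $[t_j,t_{j-1})$, where $t_j=v_j^{1/r-1/q_*}$ has a ratio independent of $\ell$, and then apply the identity $(L^r,L^{q_*})_{\vartheta,2}=L^2$. The only blemish is the garbled first expression for $\vartheta\alpha$ in your last paragraph, which should simply be deleted; the identity $\vartheta(1/r-1/q_*)=1/r-1/2$ that you derive after it is correct.
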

\begin{proof}
By the embedding $\ell^r\subset\ell^{q_*}$, and by H\"older's inequality
on each set $\mathfrak a_{j,\omega}$,
\[
 \mathfrak x_j(h;\ell)\lesssim v_j^{1/2-1/r}\|h\|_r,
 \qquad
 \mathfrak x_j(h;\ell)\lesssim v_j^{1/2-1/q_*}\|h\|_{q_*}.
\]
Let $b=1/r-1/q_*$, $\vartheta=(1/r-1/2)/b$, and $t_j=v_j^b$. Since
$\mathfrak x_j(\cdot\,;\ell)$ is subadditive, applying these bounds to
a decomposition $h=h_0+h_1$ and taking the infimum gives
\[
 \mathfrak x_j(h;\ell)\lesssim t_j^{-\vartheta}K(t_j,h),
 \qquad
 K(t,h)=\inf_{h=h_0+h_1}
                    (\|h_0\|_r+t\|h_1\|_{q_*}),
\]
where $K$ is the $K$-functional of the couple $(L^r,L^{q_*})$. The
ratio $t_{j+1}/t_j=\sigma=2^{-(d-1)b}$ does not depend on $j$. Since
$K(t,h)$ is increasing in $t$ and $K(t,h)/t$ is decreasing,
$[t_j^{-\vartheta}K(t_j,h)]^2$ is bounded by a constant times the
integral of $[t^{-\vartheta}K(t,h)]^2$ over $[t_j,t_j/\sigma]$ with
respect to $dt/t$. These intervals have disjoint interiors, hence
\[
 \sum_j\mathfrak x_j(h;\ell)^2\lesssim
 \int_0^\infty[t^{-\vartheta}K(t,h)]^2\,\frac{dt}{t}
 \simeq\|h\|_2^2.
\]
The last equivalence is the identity
$(L^r,L^{q_*})_{\vartheta,2}=L^{2}$
\cite[Chapter~5]{BerghLofstrom1976}, since
$(1-\vartheta)/r+\vartheta/q_*=1/2$. The constants depend on
$d,r,q_*$ and on the uniform measure and overlap bounds for the sets,
but not on the initial value $t_0=\ell^{db}$. In particular, they are
independent of $\ell$.
\end{proof}

\subsection{Estimates for separated caps}
We first estimate the product of two waves with Fourier supports in a
pair of separated caps, and then sum over all pairs at a fixed angular
scale and over the angular scales.

For caps $\omega\sim\omega'$ at scale $\theta$, let
$\mathfrak a_{\lambda,\omega}$ and $\mathfrak a_{\mu,\omega'}$ be the
corresponding sectors of the annuli $\{|\xi|\simeq\lambda\}$ and
$\{|\xi|\simeq\mu\}$, so that
$|\mathfrak a_{\ell,\omega}|\simeq \ell^d\theta^{d-1}$, and let
\[
 a_{\omega}=|\mathfrak a_{\lambda,\omega}|^{-\chi}
           \|\widehat f\|_{L^r(\mathfrak a_{\lambda,\omega})},\qquad
 b_{\omega'}=|\mathfrak a_{\mu,\omega'}|^{-\chi}
           \|\widehat g\|_{L^r(\mathfrak a_{\mu,\omega'})}.
\]
We interpolate \eqref{eq:cone-restriction} with $q=q_0$ and the
trivial estimate
\[\|Uf_\omega\,Ug_{\omega'}\|_{L^\infty_{x,t}}
\le\|\widehat f_\omega\|_1\|\widehat g_{\omega'}\|_1\] with weight
$\beta=q_0/r_d=1-2\chi$ on the former. The output space is then
$L^{r_d}_{x,t}$, and $\widehat f,\widehat g$ are measured in $L^r$,
since $\beta/2+1-\beta=1/r$. More explicitly,
\[
 \begin{aligned}
 \|Uf_\omega\,Ug_{\omega'}\|_{r_d}
 \lesssim{}&
 \theta^{\beta(d-1)-(d+1)/r_d}
 \mu^{\beta d-(d+1)/r_d}
 (\lambda/\mu)^{1/r_d-\beta/2+\beta\eps}\\
 &\times\|\widehat f_\omega\|_r\|\widehat g_{\omega'}\|_r.
 \end{aligned}
\]
The power of $\theta$ in this bound,
after normalization by $|\mathfrak a_{\lambda,\omega}|^{\chi}
|\mathfrak a_{\mu,\omega'}|^{\chi}$, is
$\beta(d-1)-(d+1)/r_d+2\chi(d-1)=0$. The total power of $\lambda$ and
$\mu$ is one, and the power of $\lambda$, including the factor
$(\lambda/\mu)^{\beta\eps}$ from \eqref{eq:cone-restriction}, is
\[
 \frac1{r_d}-\frac12+(d+1)\chi+\beta\eps=\frac12-\widetilde\kappa,
 \qquad
 \widetilde\kappa=\frac2{d+1}-(d+1)\chi-\beta\eps.
\]
Hence
\begin{equation}\label{eq:highdim-cap}
 \|Uf_{\omega}\,Ug_{\omega'}\|_{r_d}\lesssim(\lambda\mu)^{1/2}
 \rho^{\widetilde\kappa}a_{\omega}b_{\omega'}.
\end{equation}

We now sum over the pairs at a fixed scale $\theta$. For $\theta$
below a sufficiently small fixed constant, the paired caps lie in a
common small aperture. Thus $|\xi+\eta|\simeq\lambda$ for
$\xi\in\mathfrak a_{\lambda,\omega}$ and $\eta\in\mathfrak a_{\mu,\omega'}$,
also when $\lambda\simeq\mu$, and $\xi+\eta$ lies in a rectangle of
dimensions $O(\lambda)\times O(\lambda\theta)^{d-1}$ with long side in
the direction of $\omega$. Choose a fixed smooth radial partition
$\sum_\nu\pi_\nu(\zeta)=1$ on the possible output annulus, with
$\pi_\nu(\zeta)=q_\nu(|\zeta|/\lambda)$ supported in short radial
intervals. The multipliers $\pi_\nu(D)$ are uniformly bounded on
$L^{r_d}$. For each $\nu$, the spatial Fourier supports of
$\pi_\nu(D)(Uf_\omega\,Ug_{\omega'})$ lie in rectangles whose fixed
dilates stay away from the origin and have bounded overlap as
$\omega$ varies. Their auxiliary cutoffs have uniformly bounded
kernel $L^1$ norms by rectangular rescaling. We apply
\cref{lem:rectangles} to these projected products and sum over the
fixed finite set of $\nu$. The finitely many pairs at angles bounded
below are handled by the triangle inequality.
Since $1<r_d\le2$, \cref{lem:rectangles} and \eqref{eq:highdim-cap}
give
\[
 \begin{aligned}
 \Big\|\sum_{\omega\sim \omega'}Uf_{\omega}Ug_{\omega'}\Big\|_{r_d}
 &\lesssim(\lambda\mu)^{1/2}\rho^{\widetilde\kappa}
       \Big(\sum_{\omega\sim \omega'}(a_{\omega}b_{\omega'})^{r_d}\Big)^{1/r_d}\\
 &\lesssim(\lambda\mu)^{1/2}\rho^{\widetilde\kappa}
 \mathfrak x_j(\widehat f;\lambda)\,\mathfrak x_j(\widehat g;\mu),
 \end{aligned}
\]
where $q_*=2r_d$ in the definition of $\mathfrak x_j$. The last
inequality follows from the Cauchy--Schwarz inequality, the bounded
number of $\omega'$ with $\omega\sim\omega'$, and the bound
$(\sum_{\omega}a_{\omega}^{2r_d})^{1/(2r_d)}\lesssim
\mathfrak x_j(\widehat f;\lambda)$ and its analogue for $g$; here the
smooth cutoffs are absorbed by slightly enlarging the sectors, which
preserves bounded overlap. Applying \cref{lem:angular-sum} to
$\widehat f$ with $\ell=\lambda$ and to $\widehat g$ with $\ell=\mu$,
and summing over $j$ with the Cauchy--Schwarz inequality and
Plancherel's theorem, we obtain
\begin{equation}\label{eq:highdim-spacetime}
 \|Uf_\lambda\, Ug_\mu\|_{L^{r_d}_{x,t}}
 \lesssim(\lambda\mu)^{1/2}\rho^{\widetilde\kappa}
 \|f_\lambda\|_2\|g_\mu\|_2.
\end{equation}

Here the series over $j$ converges absolutely in $L^{r_d}_{x,t}$,
and its sum is the product $Uf_\lambda\,Ug_\mu$. Indeed,
$\widehat f_\lambda,\widehat g_\mu\in L^1$, and the partial sums of the
Whitney decomposition are uniformly bounded and converge to $1$ off
the diagonal. By dominated convergence, the corresponding sums of
products converge pointwise to $Uf_\lambda\,Ug_\mu$.

\subsection{Radial decomposition and the supremum in time}
We now deduce the maximal estimate \eqref{eq:wave-gap-local} from the
space-time estimate \eqref{eq:highdim-spacetime}.

Decompose $f_\lambda=\sum_\nu f_\nu$ smoothly into $O(\lambda/\mu)$
pieces whose Fourier transforms are supported in shells
$\{||\xi|-\varrho_\nu|\lesssim\mu\}$, with
$\sum_\nu\|f_\nu\|_2^2\lesssim\|f_\lambda\|_2^2$. Then the Fourier transform in
$t$ of $Uf_\nu\,Ug_\mu$ is supported in an interval of
length $O(\mu)$ centered at $\varrho_\nu$. By
\cref{lem:modulated-bernstein}, \eqref{eq:highdim-spacetime} for the
pairs $(f_\nu,g_\mu)$, the triangle inequality, and the
Cauchy--Schwarz inequality in $\nu$,
\begin{align}
 \|\sup_t|Uf_\lambda\, Ug_\mu|\|_{r_d}
 &\lesssim\mu^{1/r_d}(\lambda\mu)^{1/2}\rho^{\widetilde\kappa}
       (\lambda/\mu)^{1/2}\|f_\lambda\|_2\|g_\mu\|_2\nonumber\\
 &=(\lambda\mu)^{s_c}\rho^{\widetilde\kappa-1/(d+1)}
       \|f_\lambda\|_2\|g_\mu\|_2.
 \label{eq:highdim-maximal}
\end{align}
By \eqref{eq:highdim-choice},
$\widetilde\kappa-1/(d+1)=\gamma_d^*-(d+1)\chi-\beta\eps>\gamma_d$, which
proves \cref{prop:wave-gap} for $d\ge3$. In fact, \eqref{eq:highdim-maximal}
holds with the supremum taken over all $t\in\R$ and for all
$\lambda\ge\mu>0$.

\begin{remark}[The planar case]\label{rem:planar-obstruction}
The argument above does not give decay in $\rho$ when $d=2$. In this
case $r_2=3$, and the interpolation of \eqref{eq:cone-restriction}
with an exponent $q_0\le2$ requires $\chi=(1-q_0/3)/2\ge1/6$. The power
of $\rho$ remaining after the radial decomposition is then
$1/3-3\chi-\beta\eps\le-1/6$, even if $\eps=0$. In the next section we
obtain decay in $\rho$ in the plane from a finer angular decomposition
and the decoupling theorem for the cone.
\end{remark}
\section{Bilinear estimates in the plane}\label{sec:planar}

In this section we prove \cref{prop:wave-gap} for $d=2$, and the
$L^{5/2}$ estimate in \cref{prop:planar-gap}, which is used for
\cref{thm:main} in the plane. Both are based on estimates at a fixed
angular scale (\cref{lem:planar-angular-scale}); we sum them over the
angular scales and obtain strong type estimates by multilinear
interpolation, following the structure of the argument in
\cite{Lee2003}.

Throughout this section $d=2$. We denote by $\eps$ both the loss in
\eqref{eq:cone-restriction} and the loss in the decoupling
inequality; it is fixed in \cref{sec:planar-sum} for the
proof of \cref{prop:planar-gap}, and chosen small in
\cref{sec:planar-l2}. Unless stated otherwise we assume
$\lambda\ge16\mu\ge32$; the case $\lambda<16\mu$ is treated in
\cref{sec:planar-completion}. All frequency cutoffs are smooth, and the numerical constants defining
annuli and caps may be changed by fixed factors when necessary.

\subsubsection*{A time weight}
To retain exact information on the space-time Fourier support while
using estimates that are global in time, we multiply the half-wave
propagators by a band-limited weight in $t$. Fix a sufficiently small constant $b_*>0$ and a real-valued Schwartz
function $w$
on $\R$ such that
\[
 \supp\widehat w\subset[-b_*/2,b_*/2],\qquad
 w(0)=0,\qquad \inf_{1\leq t\leq2}|w(t)|>0.
\]
Then $\supp\widehat{w^2}\subset[-b_*,b_*]$. For instance, let
$w(t)=tw_0(t)$, where $w_0$ is the inverse Fourier transform of a
nonnegative even bump function with sufficiently small support; then
$w_0$ is positive on $[0,3]$. We choose $b_*$ small compared with the
constants defining the annuli and caps; this guarantees that
$u\simeq1$ in \eqref{eq:planar-lorentz-rescaling} below. Set
\[
 V_\ell f(x,t)=w(t)\ell^{-1/2}U(t)P_\ell f(x).
\]
The condition $w(0)=0$
is used only in the proof of the $L^1\to L^\infty$ estimate in
\eqref{eq:planar-cap-kernels}.

\begin{proposition}\label{prop:planar-gap}
There is $c>0$ such that
\begin{equation}
 \left\|\sup_{1\leq t\leq2}|V_\lambda f(x,t)V_\mu g(x,t)|\right\|_{5/2}
 \lesssim \rho^c\|f\|_{5/2}\|g\|_{5/2}.
 \label{eq:planar-model-gap}
\end{equation}
If $\lambda\simeq\mu$, the estimate holds with $\rho^c$
replaced by a constant.
\end{proposition}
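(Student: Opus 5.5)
We follow the scheme of \cite[Section~2]{Lee2003}, as outlined in the introduction, with Bourgain--Demeter decoupling replacing the interpolation that failed in \cref{rem:planar-obstruction}. First assume $\lambda\ge16\mu$. Decompose the pair of frequency variables by the Whitney decomposition of $\Sph^1\times\Sph^1$ at angular scales $\theta=2^{-j}$, and write $V_\lambda f\,V_\mu g=\sum_j\sum_{\omega\sim\omega'}V_\lambda f_\omega V_\mu g_{\omega'}$. For a fixed scale, split each high-frequency cap $\omega$ into subcaps $\tau$ of width $\rho^{1/2}\theta$. After the Lorentz rescaling used in the proof of \cref{lem:cone-restriction}, the space-time Fourier support of $V_\lambda f_\tau V_\mu g_{\omega'}$ lies in a $1\times\rho^{1/2}\times\rho$ sector of the $O(\rho)$-neighbourhood of the cone. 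The band-limited weight $w$ is what keeps these supports exact. Hence the $\ell^2$ decoupling theorem for the cone (\cref{lem:product-decoupling}) applies and gives
\[
\Big\|\sum_\tau V_\lambda f_\tau V_\mu g_{\omega'}\Big\|_{L^s_{x,t}}\lesssim \rho^{-\eps}\Big(\sum_\tau\|V_\lambda f_\tau V_\mu g_{\omega'}\|_{L^s_{x,t}}^2\Big)^{1/2}
\]
for $s\le6$.

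For each subcap pair we have two estimates. The first is \cref{lem:cone-restriction} at $q=s\in(19/10,2]$, which gives an $L^2\times L^2\to L^s$ bound. The second is a kernel estimate for $f\mapsto V_\lambda f_\tau$, which gives an $L^1\to L^\infty$ bound; this is where $w(0)=0$ enters. Interpolating the two, taking the supremum in $t$ via the radial shell decomposition of width $\mu$, and applying \cref{lem:modulated-bernstein}, we aim for a fixed-scale bound
\[
\Big\|\sup_{1\le t\le2}\Big|\sum_{\omega\sim\omega'}V_\lambda f_\omega V_\mu g_{\omega'}\Big|\Big\|_{L^{q}}\lesssim \theta^{\alpha(p)}\rho^{c(p)}\|f\|_{p}\|g\|_p
\]
for $(1/p,1/q)$ in a neighbourhood of $(2/5,2/5)$. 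This is \cref{lem:planar-angular-scale}. The sum over caps is handled by \cref{lem:rectangles}: use \eqref{eq:synthesis} for the output and \eqref{eq:analysis} for the inputs $f_\omega,g_{\omega'}$, which is valid since $p\ge2$. The exponent $\alpha(p)$ has sign depending on the side of the critical line, and it vanishes exactly on that line.

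Summing in $j$ then gives restricted weak type bounds at nearby exponents, by the summation trick of \cite[Lemma~2.6]{Lee2003}: split the sum at a $\theta$ adapted to the level set and balance the two geometric series. These bounds keep a positive power of $\rho$, since $c(p)>0$ near the target. Multilinear real interpolation \cite{GLLZ2012} among three or more such points, with the maximal operator linearized as in \cref{rem:linearization}, upgrades them to the strong bound $L^{5/2}\times L^{5/2}\to L^{5/2}$ with a factor $\rho^c$.

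For $\lambda<16\mu$ there is no gain to obtain. In this case the same argument is run without the subcap division, or equivalently with $\rho\simeq1$, and gives a uniform constant; this is treated in \cref{sec:planar-completion}.

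The main obstacle is the fixed-scale estimate. One has to verify that the gain $\rho^{1/2}$ from the subcap geometry in the decoupling, counting $\rho^{-1/2}$ subcaps per cap, survives the $\rho^{-1/2}$-type losses from the radial shell decomposition in the supremum. One must also check that the interpolation exponents land on the critical line at $5/2$ with $c(p)>0$. I would first balance the exponents at $s=2$ using the Candy-type bound from \cref{rem:candy}, and only then perturb $s$.
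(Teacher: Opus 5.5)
\textbf{There is a genuine gap in the fixed-scale estimate.} You rightly identify this estimate as the crux, but it cannot be obtained from the two endpoints you list.

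First, interpolating $L^2\times L^2\to L^s$ with $L^1\times L^1\to L^\infty$ (weight $\vartheta\le1$ on the first) gives $1/p_1=1/p_2=1-\vartheta/2$. Hence $1/p_1+1/p_2\ge1$, so you never reach $(2/5,2/5)$, where the sum is $4/5$.

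Second, neither endpoint sees the subcap width $\delta=\rho^{1/2}\theta$. The bound of \cref{lem:cone-restriction} depends only on the separation $\theta$. The $L^1\to L^\infty$ norm of $w\ell^{-1/2}U(t)P_{\ell,\delta}$ is $O(\ell)$ whatever $\delta$ is. Decoupling itself gives no gain, only the loss $\rho^{-\eps}$. For $p_1>2$, returning from $(\sum_\nu\|f_\nu\|_{p_1}^2)^{1/2}$ to $\|f_\omega\|_{p_1}$ over $O(\rho^{-1/2})$ subcaps costs a further $\rho^{-1/4+1/(2p_1)}$. So your scheme has no source of a positive power of $\rho$.

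The missing ingredient is the first bound in \eqref{eq:planar-cap-kernels}: $\|w\ell^{-1/2}U(t)P_{\ell,\delta}\|_{L^\infty_x\to L^\infty_{x,t}}\lesssim\delta$, proved by splitting into sectors of width $(\ell(1+|t|))^{-1/2}$. It produces the extra endpoints in \eqref{eq:planar-four-endpoints}:
\begin{itemize}
\item $L^1\times L^\infty\to L^\infty$ with bound $\lambda\theta$;
\item $L^\infty\times L^1\to L^\infty$ with bound $\mu\delta$;
\item $L^\infty\times L^\infty\to L^\infty$ with bound $\delta\theta$.
\end{itemize}
All four endpoints are interpolated with the weights \eqref{eq:planar-weights}. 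The factor $\delta^{1-x_1-sy/2}$ then gives $\rho^{(1-x_1-sy/2)/2}$. At $x_1=x_2=y=2/5$ this beats the H\"older loss $\rho^{-1/20}$, and together with the powers of $\lambda,\mu$ it leaves $\rho^{s/2-19/20-O(\eps)}$. This is why $s>19/10$ is needed. One also needs $s<2$, so that the mixed weights $(2-s)/5$ stay positive near the target. The $\eps$-loss is harmless there, so \cref{rem:candy} plays no role.

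\textbf{Your route to the supremum in time would also kill the gain.} In the plane you should not split into radial shells of width $\mu$. Since $r=5/2>2$, even with $L^2$-orthogonal accounting the shells cost $\mu^{1/r}(\lambda/\mu)^{1/2}=\lambda^{1/r}\rho^{-(1/2-1/r)}$. That is an extra $\rho^{-1/10}$ compared with applying \cref{lem:modulated-bernstein} once to the whole temporal band of length $O(\lambda)$. This exceeds the total gain, which is at most $\rho^{1/20}$. Moreover, with $L^{5/2}$ inputs the shell projections are not uniformly bounded. The paper applies Bernstein directly, and the resulting $\lambda^{1/r}$ cancels the $\lambda^{-y}$ from the interpolation, thanks to the normalization $\ell^{-1/2}$ in $V_\ell$.

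\textbf{Smaller omissions.}
\begin{itemize}
\item The Whitney scales must stop at $\theta\simeq\mu^{-1/2}$. The $O(\rho)$-neighbourhood property in \cref{lem:product-decoupling} uses $\mu\theta^2\gtrsim1$. The near-diagonal remainder is handled separately in \cref{sec:planar-completion}, by a direct $L^2_{x,t}$ bound interpolated with the $L^\infty\times L^\infty$ bound.
\item For $\lambda\simeq\mu$ the paper does not rerun the bilinear argument. It uses \cite[Proposition~1.2]{Lee2003}, the fundamental theorem of calculus in $t$, and H\"older's inequality.
\end{itemize}
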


\subsection{Angular decomposition and kernel estimates}
We decompose the inputs into pairs of caps as in \cref{sec:higher},
and record the kernel estimates for the operators $V_\ell$ restricted
to caps. We use a Whitney decomposition of $\Sph^1\times\Sph^1$. Away from a
neighborhood of the diagonal it consists of pairs $(\omega,\omega')$
of caps with
\[
 |\omega|\simeq |\omega'|\simeq \dist(\omega,\omega')\simeq\theta,
 \qquad \mu^{-1/2}\lesssim\theta\lesssim1.
\]
For each $\theta$, every $\omega$ is paired with a bounded number of
caps $\omega'$, and conversely; we write $\omega\sim\omega'$. As in
\cref{sec:higher}, such a decomposition is obtained by telescoping
products of smooth dyadic angular partitions on finitely many charts.
Each term is a product of cutoffs in the two input variables, and the
decomposition is exact, including the term near the diagonal. After a
fixed finite subdivision the widths of the caps are small compared
with their separation, as required in \cref{lem:cone-restriction}.

For the high-frequency input, $P_\omega$ denotes the angular cutoff to
$\omega$ multiplied by $\widetilde\psi(|\xi|/\lambda)$, where
$\widetilde\psi\in C_c^\infty((0,\infty))$ equals one on $\supp\psi$.
For the low-frequency input, we use the corresponding cutoff at
frequency $\mu$. We suppress the radial frequency in this notation
and write $f_\omega=P_\omega f$.
For a fixed angular scale $\theta$ set
\begin{equation}\label{eq:planar-bt}
 \begin{aligned}
 \mathfrak b_\theta(f,g)(x,t)
   &=\sum_{\omega\sim \omega'}V_\lambda f_{\omega}(x,t)\,V_\mu g_{\omega'}(x,t),\\
 \mathfrak t_\theta(f,g)(x)
   &=\sup_{1\leq t\leq2}|\mathfrak b_\theta(f,g)(x,t)|.
 \end{aligned}
\end{equation}

Let $P_{\ell,\delta}$ be a cutoff to the intersection of the annulus
$\{|\xi|\simeq\ell\}$ with a sector of angular width
$\delta\geq \ell^{-1/2}$. Then
\begin{equation}
 \label{eq:planar-cap-kernels}
\begin{aligned}
 \|w(t)\ell^{-1/2}U(t)P_{\ell,\delta}\|_{L^\infty_x\to L^\infty_{x,t}}
 \lesssim\delta,
 \\
 \|w(t)\ell^{-1/2}U(t)P_{\ell,\delta}\|_{L^1_x\to L^\infty_{x,t}}
 \lesssim \ell.
 \end{aligned}
\end{equation}
Here and below the cutoffs satisfy uniform symbol estimates after
rescaling their supporting rectangles. To prove the first estimate, we
decompose the sector into sectors of width
$\delta_t=(\ell(1+|t|))^{-1/2}$. After a rotation and the substitution
$\xi_1=\ell a$, $\xi_2=\ell\delta_t b$, and after removing the linear
phase $t\xi_1$, each piece becomes a smooth compactly supported
multiplier with uniform bounds, since $|t|\ell\delta_t^2\leq1$. Hence
each piece has a kernel of $L^1$ norm $O(\ell^{-1/2})$. There are
$O(\delta\sqrt{\ell(1+|t|)})$ pieces, so the kernel of
$\ell^{-1/2}U(t)P_{\ell,\delta}$ has $L^1$ norm
$O(\delta(1+|t|)^{1/2})$, and multiplication by $w(t)$ gives the first
estimate. For the second, stationary phase in the angular variable and
integration by parts in the radial variable away from $|x|\simeq|t|$
give the dispersive estimate
\[
 \|\ell^{-1/2}U(t)P_{\ell,\delta}\|_{L^1\to L^\infty}
 \lesssim \ell|t|^{-1/2},\qquad t\ne0;
\]
the derivatives of the angular cutoff cause no loss, since their total
variation is uniformly bounded. Since
$\sup_t |w(t)||t|^{-1/2}<\infty$, the second estimate in
\eqref{eq:planar-cap-kernels} follows.

For $d=2$, $5/3\le s\le2$, and $\eps>0$, \cref{lem:cone-restriction}
reads
\[
 \|U(t)f\,U(t)g\|_{L^s_{x,t}}
 \lesssim_{s,\eps}\lambda^{1/s-1/2+\eps}\mu^{5/2-4/s-\eps}
             \theta^{1-3/s}\|f\|_2\|g\|_2.
\]
The estimate also holds when the high-frequency cap is narrower than
$c_0\theta$. The additional narrowness will be exploited through
\eqref{eq:planar-cap-kernels} and the geometry of the Fourier support
of the product.

\subsection{Decoupling for products of half-waves}
The following lemma is the main new ingredient in the plane. It shows
that the products of the high-frequency waves on subcaps of width
$\rho^{1/2}\theta$ with a fixed low-frequency wave can be decoupled
with only a $\rho^{-\eps}$ loss.

\begin{lemma}\label{lem:product-decoupling}
Let $\lambda\ge16\mu\ge32$, $\rho=\mu/\lambda$, and
$\mu^{-1/2}\lesssim\theta\lesssim1$. Let $\omega$ and $\omega'$ be
caps of width $O(\theta)$ whose centers are at distance $O(\theta)$,
and suppose that $\omega$ has aperture at most a fixed small constant
$c_0<\pi/2$. Divide $\omega$ into subcaps $\omega_\nu$ of width
$\delta=\rho^{1/2}\theta$ whose centers are separated by a fixed
multiple of $\delta$ and which have bounded overlap. Suppose that
$\widehat f_\nu$ is supported in $\{\lambda/2\le|\xi|\le2\lambda\}$ and
in the sector determined by $\omega_\nu$, and that $\widehat g_{\omega'}$
is supported in $\{\mu/2\le|\xi|\le2\mu\}$ and in the sector determined
by $\omega'$. Let
\[
 F_\nu(x,t)=V_\lambda f_\nu(x,t)\,V_\mu g_{\omega'}(x,t).
\]
Then, for $2\le r\le6$ and $\eps>0$,
\begin{equation}\label{eq:planar-output-decoupling}
 \Big\|\sum_\nu F_\nu\Big\|_{L^r_{x,t}}
 \lesssim_\eps\rho^{-\eps}
 \Big(\sum_\nu\|F_\nu\|_{L^r_{x,t}}^2\Big)^{1/2}.
\end{equation}
The implicit constant depends only on the
support constants, $r$, $\eps$, and $w$.
\end{lemma}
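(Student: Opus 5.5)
The plan is to reduce \eqref{eq:planar-output-decoupling} to the $\ell^2$ decoupling theorem of Bourgain--Demeter for the cone in $\R^{2+1}$, applied at scale $\rho$ after a Lorentz rescaling. First we compute the space-time Fourier support of $F_\nu$. The factor $V_\lambda f_\nu$ has space-time Fourier transform in the $O(b_*)$ neighborhood (in $\tau$) of the piece of the forward cone over the sector of $\omega_\nu$ at radius $\simeq\lambda$. The factor $V_\mu g_{\omega'}$ has transform near the cone piece over $\omega'$ at radius $\simeq\mu$. Hence $\widehat F_\nu$ lies in the Minkowski sum of these two sets.

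Next we apply the map $A$ from the proof of \cref{lem:cone-restriction}, with $a\simeq\theta$, composed with the dilation by $\lambda^{-1}$. This sends the cap $\omega$ to a cap of aperture $O(1)$ at frequency $\simeq1$. The subcaps $\omega_\nu$ become sectors of angular width $\simeq\rho^{1/2}$, and the cap $\omega'$ becomes a piece of size $\simeq\rho$. In null coordinates, the forward cone piece of $V_\lambda f_\nu$ becomes a plate of dimensions $1\times\rho^{1/2}\times\rho$, up to a thickening: adding a vector of size $O(\rho)$ moves the plate by $O(\rho)$ in every direction. The $\tau$-thickening $b_*$ from $w$ becomes $O(b_*a^{-2}\lambda^{-1})$ in $v$, which is $\lesssim\rho$ because $\theta\gtrsim\mu^{-1/2}$. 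The same holds in the other directions. Here the smallness of $b_*$ keeps $u\simeq1$. Thus the rescaled $\widehat F_\nu$ lie in the canonical $1\times\rho^{1/2}\times\rho$ sectors $\tau_\nu$ of the $O(\rho)$ neighborhood of the cone $\{4uv=|z|^2,\ u\simeq1\}$. These sectors have bounded overlap because the centers of the $\omega_\nu$ are $\delta$-separated. The one check needed is that the sum of a point on the rescaled high cone and a point of size $O(\rho)$ stays in the $O(\rho)$ neighborhood of the cone, in the angular sector of $\omega_\nu$ up to $O(\rho^{1/2})$. The angular deviation caused by an $O(\rho)$ shift is only $O(\rho)$, so this holds.

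With the Fourier supports placed in the decoupling sectors, Bourgain--Demeter $\ell^2$ decoupling for the cone at scale $\rho$, valid for $2\le r\le6$, gives
\[
\Big\|\sum_\nu \widetilde F_\nu\Big\|_{L^r}\lesssim_\eps\rho^{-\eps}\Big(\sum_\nu\|\widetilde F_\nu\|_{L^r}^2\Big)^{1/2},
\]
where $\widetilde F_\nu$ is the rescaled $F_\nu$. The rescaling is linear and changes every $L^r$ norm by the same Jacobian factor, so undoing it gives \eqref{eq:planar-output-decoupling}. The $F_\nu$ are Schwartz in space-time thanks to $w$ and the frequency localization, so decoupling applies directly.

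The main obstacle is the support bookkeeping in the second step. We must verify carefully that the low-frequency contribution, the weight $w$, and the curvature error from the finite width $\theta$ of $\omega'$ all shift the rescaled supports by $O(\rho)$ in the normal direction and by $O(\rho^{1/2})$ tangentially. This uniformity in $\theta$ and $\rho$ is where the constraints $\lambda\ge16\mu$, $\theta\gtrsim\mu^{-1/2}$, and the smallness of $b_*$ and $c_0$ enter.
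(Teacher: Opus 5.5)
Your proposal is correct and follows the paper's proof. The paper uses exactly your normalized null coordinates $L$: the map $A$ with $a=\theta$, composed with the dilation by $\lambda^{-1}$. It shows that each $\widehat F_\nu$ lies in a $1\times\rho^{1/2}\times\rho$ sector $\Pi_\nu$ of the $O(\rho)$ neighborhood of the cone. There the high-frequency wave contributes $u_*(1,\varphi^2,2\varphi)$ with $|\varphi-\varphi_\nu|\lesssim\rho^{1/2}$, while the low-frequency wave and the time weight contribute only $O(\rho)$, using $\mu\theta^2\gtrsim1$; the paper then applies Bourgain--Demeter.

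The one step you leave implicit is the passage from functions with Fourier supports in boundedly overlapping sectors to the partition form of the decoupling theorem. The paper handles this with a smooth partition $\{m_k\}$ whose kernels are uniformly bounded in $L^1$, together with the two-sided incidence bound.
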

No lower bound on the separation of $\omega$ and $\omega'$ is assumed;
this is used for the near-diagonal part in \cref{sec:planar-completion}.

\begin{proof}
Let $\mathbf e(\alpha)=(\cos \alpha,\sin \alpha)$. After a rotation taking the
center of $\omega$ to $e_1$, the space-time Fourier support of $F_\nu$
consists of points
\[
 (\xi,\tau)
 =\bigl(R_1\mathbf e(\alpha)+R_2\mathbf e(\beta),\,R_1+R_2+\sigma\bigr),
 \qquad |\sigma|\leq b_*,
\]
where $\supp\widehat{w^2}\subset[-b_*,b_*]$ gives the bound on
$\sigma$, and
\[
 R_1\simeq\lambda,\quad R_2\simeq\mu,\quad
 |\alpha|,|\beta|\lesssim\theta,\quad
 |\alpha-\alpha_\nu|\lesssim \delta=\rho^{1/2}\theta,
\]
with $\alpha_\nu$ the center of $\omega_\nu$. We have $|\alpha|\le c_0<\pi/2$.

If $\sigma=0$,
\[
 \tau^2-|\xi|^2
 =2R_1R_2\bigl(1-\cos(\alpha-\beta)\bigr)
 =O(\lambda\mu\theta^2),
\]
and in general the right-hand side changes by
$2\sigma(R_1+R_2)+\sigma^2$. We use the normalized null coordinates
\begin{equation}
 u=\frac{\tau+\xi_1}{2\lambda},\qquad
 v=\frac{\tau-\xi_1}{2\lambda\theta^2},\qquad
 z=\frac{\xi_2}{\lambda\theta},
 \label{eq:planar-lorentz-rescaling}
\end{equation}
and write $L$ for the linear map $(\xi,\tau)\mapsto(u,v,z)$. Since
$4uv-z^2=(\tau^2-|\xi|^2)/(\lambda^2\theta^2)$, $L$ maps the light cone
onto $\{4uv=z^2\}$. By the choice of $b_*$ we have $u\simeq1$; moreover
$|v|+|z|\lesssim1$, and
\begin{align*}
 4uv-z^2
 &=\frac{2R_1R_2(1-\cos(\alpha-\beta))
                +2\sigma(R_1+R_2)+\sigma^2}{\lambda^2\theta^2}
 \\
 &=O\left(\rho+\frac{1}{\lambda\theta^2}
                  +\frac{1}{\lambda^2\theta^2}\right)
 =O(\rho),
\end{align*}
where we used $\mu\theta^2\gtrsim1$. Since $u$ is bounded
away from zero, the support lies in the $O(\rho)$ neighborhood of the
cone $v=z^2/(4u)$.

We next determine the position of the normalized Fourier support
of each $F_\nu$. Set
\[
 u_*=\frac{R_1\cos^2(\alpha/2)}{\lambda},\qquad
 \varphi=\frac{\tan(\alpha/2)}{\theta},\qquad
 \varphi_\nu=\frac{\tan(\alpha_\nu/2)}{\theta}.
\]
The contribution of the high-frequency wave to
$(u,v,z)$ is $u_*(1,\varphi^2,2\varphi)$,
with $u_*\simeq1$ and $|\varphi|\lesssim1$. The contribution of the
low-frequency wave is $O(\rho)$ in each coordinate:
\[
 \frac{R_2(1+\cos\beta)}{2\lambda}=O(\rho),\qquad
 \frac{R_2(1-\cos\beta)}{2\lambda\theta^2}=O(\rho),\qquad
 \frac{R_2\sin\beta}{\lambda\theta}=O(\rho),
\]
and the contribution of the time weight,
$\bigl(\sigma/(2\lambda),\sigma/(2\lambda\theta^2),0\bigr)$, is also
$O(\rho)$. Hence every point of the normalized Fourier support of
$F_\nu$ has the form
\begin{equation}
 (u,v,z)=u_*(1,\varphi^2,2\varphi)+E,\qquad
 |E|\lesssim\rho,\qquad |\varphi-\varphi_\nu|\lesssim\rho^{1/2}.
 \label{eq:planar-fine-support-parametrization}
\end{equation}
In particular, $z/(2u)=\varphi+O(\rho)$; the
radial frequencies affect only $u_*$, and the low-frequency wave
changes the direction only by $O(\rho)$.

In the coordinates
\[
 T_\nu=z-2\varphi_\nu u,\qquad
 N_\nu=v-\varphi_\nu z+\varphi_\nu^2u,
\]
\eqref{eq:planar-fine-support-parametrization} gives
\[
 T_\nu=2u_*(\varphi-\varphi_\nu)+O(\rho)=O(\rho^{1/2}),\qquad
 N_\nu=u_*(\varphi-\varphi_\nu)^2+O(\rho)=O(\rho).
\]
Thus the normalized Fourier support of $F_\nu$ lies in the sector
\[
 \Pi_\nu
 =\left\{c\leq u\leq C,\quad
    |z-2\varphi_\nu u|\leq C\rho^{1/2},\quad
    |v-\varphi_\nu z+\varphi_\nu^2u|\leq C\rho\right\}
\]
for fixed constants $c,C$. The map
$(u,v,z)\mapsto(u,T_\nu,N_\nu)$
and its inverse have uniformly bounded coefficients and determinant of
modulus one. Hence, up to fixed changes of the dimensions and a fixed
number of radial pieces, the sets $\Pi_\nu$ are the standard sectors
of dimensions $1\times\rho^{1/2}\times\rho$ of the $O(\rho)$
neighborhood of the cone.

The parameters $\varphi_\nu$ of neighboring subcaps are separated by
distances comparable to $\rho^{1/2}$, and
$|z/(2u)-\varphi_\nu|\lesssim\rho^{1/2}$ on $\Pi_\nu$.
Consequently each $\Pi_\nu$ meets a bounded number of standard sectors
of thickness $\rho$, and each standard sector meets the normalized
Fourier supports of a bounded number of the $F_\nu$, uniformly in
$\lambda,\mu,\theta$.

Let $H_\nu$ be the function with $\widehat{H_\nu}=\widehat{F_\nu}\circ L^{-1}$.
Since $L$ is linear, $\|H_\nu\|_{L^r}=c_L\|F_\nu\|_{L^r}$ with the same
constant $c_L$ for all $\nu$, and likewise for $\sum_\nu H_\nu$.
Identifying $\{4uv=z^2\}$ with the standard light cone and making a
fixed finite radial subdivision, we see that $\sum_\nu H_\nu$ has
Fourier support in
a fixed truncated cone neighborhood of thickness $C\rho$. If $\rho$ is
bounded below, the number of subcaps is bounded and
\eqref{eq:planar-output-decoupling} follows from the triangle
inequality; hence we may assume that $C\rho$ is small.

We use the smooth formulation of the decoupling inequality, with a
partition of unity $\{Q_k\}$ subordinate to sectors of this
neighborhood. The multipliers $Q_k$ have kernels uniformly bounded
in $L^1$. Explicitly, let
$\{\chi_k\}$ be a smooth partition of unity in the variable
$z/(2u)$ at scale $\rho^{1/2}$, with centers
$\varphi_k$, and let
\[
 m_k(u,v,z)
 =\chi_0(u)\chi_k\!\left(\frac{z}{2u}\right)
   \chi_*\!\left(\frac{v-\varphi_k z+\varphi_k^2u}{\rho}\right),
\]
where $\chi_0$ equals one on the relevant interval of $u$
and $\chi_*$ equals one on a sufficiently large fixed interval. On the
support of $\chi_k$, the identity
\[
 v-\varphi_k z+\varphi_k^2u
 =v-\frac{z^2}{4u}
   +u\left(\frac{z}{2u}-\varphi_k\right)^2
\]
shows that $\sum_km_k=1$ on the Fourier support of
$\sum_\nu H_\nu$. In the affine coordinates
\[
 u,\qquad \frac{z-2\varphi_k u}{\rho^{1/2}},\qquad
 \frac{v-\varphi_k z+\varphi_k^2u}{\rho},
\]
the symbols $m_k$ are smooth and compactly supported with uniform
bounds, since $u$ is bounded away from zero; hence their
kernels are uniformly bounded in $L^1$.

Define
\[
 \mathfrak n(k)
 =\{\nu:\operatorname{supp}\widehat H_\nu
          \cap\operatorname{supp}m_k\ne\varnothing\}.
\]
By the two incidence bounds and the kernel bounds, for
$2\leq r\leq6$,
\begin{equation}\label{eq:planar-plate-incidence-norm}
 \begin{aligned}
 \Big(\sum_k\Big\|Q_k\sum_\nu H_\nu\Big\|_r^2\Big)^{1/2}
 &\le\Big(\sum_k\Big(\sum_{\nu\in\mathfrak n(k)}
     \|Q_kH_\nu\|_r\Big)^2\Big)^{1/2}\\
 &\lesssim\Big(\sum_\nu\|H_\nu\|_r^2\Big)^{1/2}.
 \end{aligned}
\end{equation}
The $\ell^2$ decoupling theorem for the cone in $\R^3$ of
Bourgain--Demeter \cite[Theorem~1.2]{BD2015} holds for $2\leq r\leq6$. Applying
it at thickness $C\rho$ together with
\eqref{eq:planar-plate-incidence-norm}, and undoing the change of
variables, we obtain \eqref{eq:planar-output-decoupling}; the constant
$c_L$ appears on both sides and cancels.
\end{proof}

\subsection{Estimates at a fixed angular scale}
Combining \cref{lem:cone-restriction}, the kernel estimates
\eqref{eq:planar-cap-kernels}, and \cref{lem:product-decoupling}, we
obtain estimates for $\mathfrak t_\theta$ in a range of exponents.

\begin{lemma}\label{lem:planar-angular-scale}
Let $5/3\le s\le2$ and $\eps>0$, and let $p_1,p_2,r$ satisfy
\begin{enumerate}
\item[(i)] $p_1,p_2\ge2$ and $2\le r\le6$;
\item[(ii)] $1/p_1+1/p_2\ge1-1/r$;
\item[(iii)] the numbers $s/r$, $1/p_1-s/(2r)$, $1/p_2-s/(2r)$, and
$1-1/p_1-1/p_2$ are nonnegative.
\end{enumerate}
Let
\begin{equation}\label{eq:planar-a-gamma}
 A=\frac1{2p_1}+\frac1{2p_2}-\frac1r,\qquad
 \gamma_\eps=\frac14-\frac1{2p_1}+\frac1{2p_2}
                 +\Big(\frac{5s}{4}-3-s\eps\Big)\frac1r.
\end{equation}
Then, with $\mathfrak t_\theta$ as in \eqref{eq:planar-bt}, uniformly
for $\mu^{-1/2}\lesssim\theta\lesssim1$,
\begin{equation}\label{eq:planar-angular-scale}
 \|\mathfrak t_\theta(f,g)\|_{L^r}
 \lesssim_{s,\eps}
 (\lambda\mu)^{A}
 \rho^{\gamma_\eps-\eps}
 \theta^{2-1/p_1-1/p_2-3/r}
 \|f\|_{L^{p_1}}\|g\|_{L^{p_2}}.
\end{equation}
\end{lemma}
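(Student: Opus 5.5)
The bound \eqref{eq:planar-angular-scale} will follow by multilinear (complex) interpolation between three estimates for $\mathfrak t_\theta$ at a fixed angular scale: an $L^2\times L^2\to L^s$ type estimate coming from \cref{lem:cone-restriction} and \cref{lem:product-decoupling}, and two endpoint estimates from \eqref{eq:planar-cap-kernels}, namely $L^\infty\times L^\infty\to L^\infty$ and $L^1\times L^\infty\to L^\infty$ (together with its mirror $L^\infty\times L^1\to L^\infty$). The exponents $s/r$, $1/p_1-s/(2r)$, $1/p_2-s/(2r)$, $1-1/p_1-1/p_2$ in (iii) are exactly the barycentric weights. The first is on the $L^2\times L^2\to L^s$ estimate, and the next two are on the $L^1\times L^\infty$ and $L^\infty\times L^1$ estimates; after subtracting their share of the exponents, the remaining weight falls on $L^\infty\times L^\infty$. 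Nonnegativity of these weights is precisely (iii). Hence I only need the three corner estimates with the correct powers of $\lambda,\mu,\rho,\theta$, and I then check that the convex combination reproduces $A$, $\gamma_\eps$ and the $\theta$ power. The maximal function is linearized as in \cref{rem:linearization}.

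\emph{Endpoint corners.} For $L^\infty\times L^\infty\to L^\infty$, I use the first estimate in \eqref{eq:planar-cap-kernels} with $\delta\simeq\theta$ for both factors. This gives $|V_\lambda f_\omega V_\mu g_{\omega'}|\lesssim\theta^2\|f\|_\infty\|g\|_\infty$. To sum over the pairs $\omega\sim\omega'$, I would rather not sum the $\theta^{-1}$ caps trivially. Instead I write $\mathfrak b_\theta$ as a bilinear multiplier and note that at fixed $t$, $\sum_{\omega\sim\omega'}$ is a sum of $O(1)$ "diagonal shifts" $\sum_\omega V f_\omega V g_{\omega+k}$. Bounding $|\sum_\omega|\le(\sum_\omega|Vf_\omega|^2)^{1/2}(\sum|Vg_{\omega'}|^2)^{1/2}$ and using the kernel bounds gives $O(\theta)$ overall, which matches $\theta^{2-0-0-0}\cdot\theta^{-1}$. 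Since I need to match the claimed $\theta^{2-1/p_1-1/p_2-3/r}$, I accept whatever this yields and reconcile it at the end. For $L^1\times L^\infty$, the second estimate in \eqref{eq:planar-cap-kernels} gives $\lambda\cdot\theta\|f\|_1\|g\|_\infty$ per pair, or the mirror $\mu\theta$.

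\emph{The $L^2$ corner.} This corner is the main obstacle. In it I split each $f_\omega$ into subcaps $f_\nu$ of width $\delta=\rho^{1/2}\theta$ and apply \cref{lem:product-decoupling} in $L^s$ (note $5/3\le s\le 2$ is inside $[2,6]$ only at $s=2$, so instead I apply decoupling in $L^r$ after interpolation, or use the trivial $\ell^2$ orthogonality at $s\le2$). Each piece $F_\nu$ is then estimated by \cref{lem:cone-restriction} at exponent $s$, with the narrow high cap. The supremum in $t$ is handled by \cref{lem:modulated-bernstein}, after a radial decomposition of $f$ into shells of width $\mu$, as in \cref{sec:higher}; this costs $\mu^{1/s}$ and a $(\lambda/\mu)^{1/2}$ Cauchy--Schwarz factor. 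Summing over $\omega\sim\omega'$ uses \cref{lem:rectangles}.

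The delicate point is ordering the interpolation so that decoupling at thickness $\rho$ is applied in the range $2\le r\le6$ with only a $\rho^{-\eps}$ loss. I would therefore interpolate at the level of the pieces $F_\nu$ first and decouple at the final exponent $r$. This is where the conditions (i) and (ii) enter, with (ii) ensuring the $\ell^2$ sum over $\nu$ is controlled by the $\ell^{p}$ sums of the inputs through H\"older. Finally I verify by linear algebra that the exponents combine to $A$, $\gamma_\eps-\eps$ and the stated $\theta$ power.
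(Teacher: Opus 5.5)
There is a genuine gap. Your plan does not locate the source of the gain in $\rho$, and with the corner estimates you write down, the exponent $\gamma_\eps-\eps$ cannot be reached. In the paper, the four-point interpolation uses exactly your weights $s/r$, $1/p_1-s/(2r)$, $1/p_2-s/(2r)$, $1-1/p_1-1/p_2$. It is carried out for the space-time bilinear operator $(f,g)\mapsto F_\nu=V_\lambda f_\nu\,V_\mu g_{\omega'}$, where $f_\nu$ lives on a subcap of width $\delta=\rho^{1/2}\theta$. The corner bounds are:
\begin{itemize}
\item $\lambda^{1/s-1}\mu^{2-4/s}\theta^{1-3/s}\rho^{-\eps}$ for $L^2\times L^2\to L^s_{x,t}$. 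This is just \cref{lem:cone-restriction}, which allows the narrower high-frequency cap; there is no decoupling and no supremum in $t$ at this corner.
\item $\lambda\theta$ for $L^1\times L^\infty$.
\item $\mu\delta$ for $L^\infty\times L^1$.
\item $\delta\theta$ for $L^\infty\times L^\infty$.
\end{itemize}
The narrow width in the last two corners produces the factor $\rho^{(1-1/p_1-s/(2r))/2}$. This compensates for the loss $\rho^{-1/4+1/(2p_1)}$ incurred when the $O(\rho^{-1/2})$ subcaps are summed in $\ell^2$ after \cref{lem:product-decoupling}, via \eqref{eq:analysis} and H\"older with $p_1\ge2$. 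The net gain is $\rho^{(1-s/r)/4}$. You use width $\theta$ in both factors ($\theta^2$, $\lambda\theta$, $\mu\theta$). So even if you interpolate at the level of $F_\nu$, as your last paragraph suggests, you end up with $\rho^{\gamma_\eps-\eps-\frac12(1-\frac1{p_1}-\frac{s}{2r})}$. At $(1/p_1,1/p_2,1/r)=(2/5,2/5,2/5)$ with $s=39/20$, which is exactly the point used in \cref{sec:planar-sum}, this exponent is about $0.025-0.105<0$. The subcap decomposition then becomes a pure loss.

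The second problem is the order of operations. \Cref{lem:product-decoupling} is an $L^r_{x,t}$ inequality valid only for $2\le r\le6$, and it relies on the time weight $w$ to keep the Fourier support in the $O(\rho)$-neighborhood of the cone. It cannot be applied to a maximal function, nor to a corner at exponent $s<2$. So interpolating estimates for $\mathfrak t_\theta$ directly, or taking the supremum inside the $L^2$ corner via shells of width $\mu$, is incompatible with decoupling afterwards. The paper keeps everything in space-time until the very end:
\begin{itemize}
\item After decoupling, the pairs $\omega\sim\omega'$ are summed in $L^r_{x,t}$ by \cref{lem:rectangles}, which produces an $\ell^{r'}$ sum.
\item Condition (ii) enters here, to get $\|(a_\omega b_{\omega'})\|_{\ell^{r'}}\lesssim\|a\|_{\ell^{p_1}}\|b\|_{\ell^{p_2}}$. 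It is not used for the sum over $\nu$, as you suggest.
\item Only then is \cref{lem:modulated-bernstein} applied to $\mathfrak b_\theta$, whose $t$-Fourier support has length $O(\lambda)$. The resulting factor $\lambda^{1/r}$ cancels the $\lambda^{-1/r}$ in the space-time bound.
\end{itemize}
Summing the pairs at the corners instead is lossy. Your own $L^\infty\times L^\infty$ computation gives $\theta$ rather than $\theta^2$, which after interpolation lowers the $\theta$-exponent by $1-1/p_1-1/p_2$. This cannot be ``reconciled at the end''. The exact exponent $2-1/p_1-1/p_2-3/r$ is what produces the powers $\theta^{\pm3\zeta}$ in \cref{sec:planar-sum}, and since $\theta$ goes down to $\mu^{-1/2}$, any discrepancy is a power loss in $\mu$.
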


\begin{proof}
Throughout the proof we write $x_i=1/p_i$, $i=1,2$, and $y=1/r$.

\emph{Interpolation on subcaps.}
Fix a pair $\omega\sim\omega'$ and divide $\omega$ into subcaps
$\omega_\nu$ of width
\begin{equation}
 \delta=\rho^{1/2}\theta.
 \label{eq:planar-h}
\end{equation}
There are $O(\rho^{-1/2})$ subcaps, and $\delta\gtrsim\lambda^{-1/2}$,
so \eqref{eq:planar-cap-kernels} applies to them. Let
$f_\nu=P_{\omega_\nu}f_{\omega}$, $g_{\omega'}=P_{\omega'}g$, and
\[
 F_\nu(x,t)=V_\lambda f_\nu(x,t)\,V_\mu g_{\omega'}(x,t).
\]
Since $w$ is bounded, \cref{lem:cone-restriction} and
\eqref{eq:planar-cap-kernels} give the following four estimates for the
bilinear operator $(f,g)\mapsto F_\nu$:
\begin{equation}
\addtolength{\arraycolsep}{10pt}
\begin{array}{c|c}
\text{spaces}&\text{bound}\\[3pt]
L^2\times L^2\longrightarrow L^s_{x,t}
 &\lambda^{1/s-1}\mu^{2-4/s}\theta^{1-3/s}\rho^{-\eps}\\[3pt]
L^1\times L^\infty\longrightarrow L^\infty_{x,t}
 &\lambda\theta\\[3pt]
L^\infty\times L^1\longrightarrow L^\infty_{x,t}
 &\mu \delta\\[3pt]
L^\infty\times L^\infty\longrightarrow L^\infty_{x,t}
 &\delta\theta.
\end{array}
\label{eq:planar-four-endpoints}
\end{equation}
To interpolate, we insert slightly enlarged cutoffs that equal one on
the Fourier supports of $f_\nu$ and $g_{\omega'}$. The enlargements are
fixed and preserve the separated-cap hypotheses. The resulting
bilinear operator acts on arbitrary inputs, so its interpolated
estimate applies to $f_\nu$ and $g_{\omega'}$ themselves.
We interpolate with the weights
\begin{equation}
 sy,\qquad x_1-\frac{sy}{2},\qquad
 x_2-\frac{sy}{2},\qquad 1-x_1-x_2,
 \label{eq:planar-weights}
\end{equation}
which are nonnegative by (iii), sum to one, and give the input
exponents $x_1,x_2$ and the output exponent $y$. This gives
\begin{equation}
 \begin{aligned}
 \|F_\nu\|_r
 &\lesssim
 \lambda^{x_1+y-3sy/2+sy\eps}
 \mu^{x_2+3sy/2-4y-sy\eps}\\
 &\qquad{}\times \delta^{1-x_1-sy/2}
 \theta^{1-x_2+sy/2-3y}
 \|f_\nu\|_{p_1}\|g_{\omega'}\|_{p_2}.
 \end{aligned}
 \label{eq:planar-fine-interpolation}
\end{equation}
For instance, the power of $\lambda$ is
$(1/s-1+\eps)sy+(x_1-sy/2)$.

\emph{Decoupling.}
By \eqref{eq:analysis}, since $p_1\ge2$,
$(\sum_\nu\|f_\nu\|_{p_1}^{p_1})^{1/p_1}\lesssim\|f_{\omega}\|_{p_1}$,
and since there are $O(\rho^{-1/2})$ subcaps, H\"older's inequality
gives
\begin{equation}
 \Big(\sum_\nu\|f_\nu\|_{p_1}^2\Big)^{1/2}
 \lesssim\rho^{-1/4+x_1/2}\|f_{\omega}\|_{p_1}.
 \label{eq:planar-fine-input-sum}
\end{equation}
We substitute \eqref{eq:planar-h} into
\eqref{eq:planar-fine-interpolation} and combine the result with
\eqref{eq:planar-output-decoupling} and
\eqref{eq:planar-fine-input-sum}. The substitution gives the factor
$\rho^{(1-x_1-sy/2)/2}$, and \eqref{eq:planar-fine-input-sum} gives
the factor $\rho^{-1/4+x_1/2}$. With $A$ and $\gamma_\eps$ as in
\eqref{eq:planar-a-gamma}, this gives
\begin{equation}
 \|V_\lambda f_{\omega}\,V_\mu g_{\omega'}\|_{L^r_{x,t}}
 \lesssim_\eps
 \lambda^{-y}(\lambda\mu)^A
 \rho^{\gamma_\eps-\eps}
 \theta^{2-x_1-x_2-3y}
 \|f_{\omega}\|_{p_1}\|g_{\omega'}\|_{p_2}.
 \label{eq:planar-coarse-spacetime}
\end{equation}

\emph{Summation over directions.}
The spatial Fourier support of $V_\lambda f_\omega V_\mu g_{\omega'}$ is
contained in a rectangle of length $O(\lambda)$ and width
$O(\lambda\theta)$ in the direction of $\omega$. After a fixed finite
partition in $|\xi|/\lambda$ into intervals of small fixed length,
whose multipliers have kernels uniformly bounded in $L^1$, these
rectangles and their fixed dilates stay away from the origin, and for
caps in a small fixed aperture each of them meets a bounded number of
the rectangles associated with other caps at scale $\theta$; the
finitely many caps of large aperture are treated separately. Since
$2\le r<\infty$, \cref{lem:rectangles} gives
\[
 \|\mathfrak b_\theta(f,g)\|_{L^r_{x,t}}
 \lesssim
 \Big(\sum_{\omega\sim \omega'}
   \|V_\lambda f_{\omega}\,V_\mu g_{\omega'}\|_{L^r_{x,t}}^{r'}\Big)^{1/r'}.
\]
Let $a_{\omega}=\|f_{\omega}\|_{p_1}$ and $b_{\omega'}=\|g_{\omega'}\|_{p_2}$.
By (ii), H\"older's inequality, the bounded number of $\omega'$ with
$\omega\sim\omega'$, and the inclusion $\ell^{u}\subset\ell^{r'}$ for
$1/u=x_1+x_2\ge1/r'$,
\begin{equation}
 \|(a_{\omega}b_{\omega'})_{\omega\sim \omega'}\|_{\ell^{r'}}
 \lesssim\|a\|_{\ell^{p_1}}\|b\|_{\ell^{p_2}}
 \lesssim\|f\|_{p_1}\|g\|_{p_2},
 \label{eq:planar-coarse-input-condition}
\end{equation}
where the last inequality follows from \eqref{eq:analysis}, since
$p_1,p_2\ge2$.

\emph{The supremum in time.}
The Fourier transform of $\mathfrak b_\theta(f,g)$ in $t$ is supported
in an interval of length $O(\lambda)$, so
\cref{lem:modulated-bernstein} gives
\[
 \|\mathfrak t_\theta(f,g)\|_r
 \lesssim\lambda^{1/r}\|\mathfrak b_\theta(f,g)\|_{L^r_{x,t}}.
\]
The factor $\lambda^{1/r}=\lambda^y$ cancels $\lambda^{-y}$ in
\eqref{eq:planar-coarse-spacetime}. Combining this with
\eqref{eq:planar-coarse-spacetime}--\eqref{eq:planar-coarse-input-condition}
proves \eqref{eq:planar-angular-scale}.
\end{proof}

\subsection{Summation over angular scales}\label{sec:planar-sum}
We now sum the estimates of \cref{lem:planar-angular-scale} over the
angular scales $\theta$ for $(1/p_1,1/p_2,1/r)$ near $(2/5,2/5,2/5)$,
first in weak type and then in strong type by multilinear
interpolation. We fix $s\in(19/10,2)$, for instance $s=39/20$. At
$1/p_1=1/p_2=1/r=2/5$ the weights \eqref{eq:planar-weights} are $2s/5$,
$(2-s)/5$, $(2-s)/5$, and $1/5$, which are positive; conditions (i) and
(ii) of \cref{lem:planar-angular-scale} hold with strict inequality;
and $A=0$ and $\gamma_\eps=s/2-19/20-2s\eps/5$. Since $s>19/10$, we may
choose $0<\eps\le1/40$ and a neighborhood $\mathcal B$ of $(2/5,2/5,2/5)$
so small that the hypotheses of \cref{lem:planar-angular-scale} hold
for $(1/p_1,1/p_2,1/r)\in\mathcal B$ and that
$\gamma_\eps-\eps\ge c:=(s/2-19/20)/2$ there. Hence we may replace
$\rho^{\gamma_\eps-\eps}$ by $\rho^c$ in \eqref{eq:planar-angular-scale}
on $\mathcal B$.

For fixed $p_1,p_2$ define $r_0$ by $1/r_0=(2-1/p_1-1/p_2)/3$, and
choose $\zeta>0$ so small that $r_\pm$, given by $1/r_\pm=1/r_0\pm\zeta$,
lie in $\mathcal B$. For $\theta=2^{-m}$, the powers of $\theta$ in
\eqref{eq:planar-angular-scale} with $r=r_\pm$ are $2^{\pm3\zeta m}$. We show that the bisublinear operator
\[
 \mathfrak s(f,g)=\sum_{\theta}\mathfrak t_\theta(f,g),
\]
where $\mathfrak t_\theta$ is given by \eqref{eq:planar-bt} and the sum
is over the dyadic angular scales $\mu^{-1/2}\lesssim\theta\lesssim1$,
satisfies
\begin{equation}
 \|\mathfrak s(f,g)\|_{L^{r_0,\infty}}
 \lesssim
 (\lambda\mu)^{\frac1{2p_1}+\frac1{2p_2}-\frac1{r_0}}\rho^c
 \|f\|_{L^{p_1}}\|g\|_{L^{p_2}}.
 \label{eq:planar-angular-weak}
\end{equation}
With the input exponents fixed, Bourgain's summation argument applied
to the estimates with $r=r_\pm$ gives
$\|\mathfrak s(f,g)\|_{L^{r_0,\infty}}\lesssim(M_+M_-)^{1/2}$;
see \cite{Lee2003} for details. Here
\[
 M_\pm=(\lambda\mu)^{\frac1{2p_1}+\frac1{2p_2}-\frac1{r_\pm}}
           \rho^c\|f\|_{p_1}\|g\|_{p_2}.
\]
Since $1/r_0=(1/r_++1/r_-)/2$, this is \eqref{eq:planar-angular-weak}.

We now obtain a strong type estimate by multilinear real
interpolation. Let $a_0=2/5$, and let $h>0$ be small. Consider
the three points
\begin{equation}
 (a_0+h,a_0),\qquad(a_0,a_0+h),\qquad
 (a_0-h,a_0-h)
 \label{eq:planar-interpolation-triangle}
\end{equation}
in the $(1/p_1,1/p_2)$-plane, with output exponents given by
$1/r=(2-1/p_1-1/p_2)/3$. For $h$ small, \eqref{eq:planar-angular-weak}
holds at these points, and their barycenter is $(2/5,2/5)$, where
$1/r=2/5$. In the notation of \cite[Corollary~1.1]{GLLZ2012}, the
relevant determinants are, up to sign,
\[
 \gamma_0=3h^2,\qquad \gamma_1=\gamma_2=h^2;
\]
indeed, the column of output exponents is $2/3$ times the
constant column minus $1/3$ times each input column.
Replacing either input column by this column therefore
multiplies the determinant by $-1/3$. Since the determinants do not
vanish and $1/r\leq1/p_1+1/p_2$, \cite[Corollary~1.1]{GLLZ2012}
applies to
$\mathfrak s$ and gives a strong type bound at the barycenter, with
constant bounded by the geometric mean of the three weak type
constants. The powers of $\lambda\mu$ at the three points are
$5h/6$, $5h/6$, and $-5h/3$, whose mean is zero, while
the factor $\rho^c$ is common to all three. Hence
\begin{equation}
 \|\mathfrak s(f,g)\|_{5/2}
 \lesssim\rho^c\|f\|_{5/2}\|g\|_{5/2}.
 \label{eq:planar-transverse-strong}
\end{equation}

\begin{figure}[htbp]
\centering
\begin{tikzpicture}[x=2.1cm,y=1.7cm,>=Stealth,font=\small]
  \coordinate (a) at (1,0);
  \coordinate (b) at (0,1);
  \coordinate (c) at (-1,-1);
  \coordinate (t) at (0,0);
  \fill[blue!7] (a)--(b)--(c)--cycle;
  \draw[->,gray!65] (-1.4,0)--(1.5,0) node[right] {$1/p_1$};
  \draw[->,gray!65] (0,-1.42)--(0,1.5) node[above] {$1/p_2$};
  \draw[blue!65!black,thick] (a)--(b)--(c)--cycle;
  \draw[densely dotted,gray] (a)--($(b)!.5!(c)$);
  \draw[densely dotted,gray] (b)--($(a)!.5!(c)$);
  \draw[densely dotted,gray] (c)--($(a)!.5!(b)$);
  \foreach \p in {a,b,c} \fill[blue!65!black] (\p) circle[radius=2pt];
  \fill[orange!85!black] (t) circle[radius=2.8pt];
  \node[above right,align=left] at (a) {$v_1=(2/5+h,2/5)$};
  \node[above left,align=right] at (b) {$v_2=(2/5,2/5+h)$};
  \node[below left,align=right] at (c) {$v_3=(2/5-h,2/5-h)$};
  \node[below right,fill=white,inner sep=2pt] at ($(t)+(.04,-.03)$)
       {$v_*=(2/5,2/5)$};
\end{tikzpicture}
\caption{The interpolation points \eqref{eq:planar-interpolation-triangle}
in the $(1/p_1,1/p_2)$-plane, $0<h\ll1$. At each vertex
\eqref{eq:planar-angular-weak} holds with $1/r=(2-1/p_1-1/p_2)/3$. At
the barycenter $v_*$ we have $1/r=2/5$, and multilinear interpolation gives
\eqref{eq:planar-transverse-strong}. The axes are drawn through
$(2/5,2/5)$.}
\label{fig:planar-simplex}
\end{figure}

\subsection{Near-diagonal and comparable frequencies}
\label{sec:planar-completion}
It remains to treat the near-diagonal pairs of caps and the case of
comparable frequencies.

\emph{The near-diagonal part.}
The remaining pairs of directions are at distance at most
$\theta_0\simeq\mu^{-1/2}$. We cover them by pairs of caps of width
$\theta_0$ with bounded multiplicity, and we divide the high-frequency
caps into subcaps of width
$\delta=\rho^{1/2}\theta_0\simeq\lambda^{-1/2}$. For each $t$,
$\|V_\lambda f_\nu(t)\|_2\le|w(t)|\lambda^{-1/2}\|f_\nu\|_2$, and since
the low-frequency sector has area $O(\mu^2\theta_0)=O(\mu^{3/2})$,
Bernstein's inequality gives
\[
 \|V_\mu g_{\omega'}(t)\|_\infty
 \lesssim |w(t)|\mu^{1/4}\|g_{\omega'}\|_2.
\]
Integrating $|w(t)|^4$ in $t$, we obtain
\begin{equation}
 \|V_\lambda f_\nu\, V_\mu g_{\omega'}\|_{L^2_{x,t}}
 \lesssim\lambda^{-1/2}\mu^{1/4}
                 \|f_\nu\|_2\|g_{\omega'}\|_2.
 \label{eq:planar-terminal-l2}
\end{equation}
Interpolation between \eqref{eq:planar-terminal-l2}, with weight $4/5$,
and the $L^\infty\times L^\infty$ estimate in
\eqref{eq:planar-four-endpoints} gives
\[
 \|V_\lambda f_\nu\, V_\mu g_{\omega'}\|_{L^{5/2}_{x,t}}
 \lesssim\lambda^{-2/5}\rho^{1/10}
                 \|f_\nu\|_{5/2}\|g_{\omega'}\|_{5/2}.
\]
\Cref{lem:product-decoupling} with $\theta=\theta_0$, which requires no
separation of the caps, and \eqref{eq:planar-fine-input-sum} with
$p_1=5/2$ cost a factor $\rho^{-\eps-1/20}$. The sum over the caps and
the supremum in $t$ are handled as in the proof of
\cref{lem:planar-angular-scale}, and Bernstein's inequality in $t$
cancels the factor $\lambda^{-2/5}$. Thus the near-diagonal part
satisfies \eqref{eq:planar-model-gap} with the factor
$\rho^{1/20-\eps}\le\rho^c$. Together with
\eqref{eq:planar-transverse-strong}, this proves
\eqref{eq:planar-model-gap} for $\lambda\ge16\mu$.

\emph{Comparable frequencies.}
If $\lambda\simeq\mu$, we use \cite[Proposition~1.2]{Lee2003}; its
condition $q>14/3$ holds at $(p,q)=(5/2,5)$. Covering $[1/2,3]$ by the
fixed dilates $[1/2,1]$, $[1,2]$, and $[2,4]$ of $[1,2]$, and applying
the corresponding space-time dilations, we obtain
\[
 \|U(t)P_\ell f\|_{L^5(\R^2\times[1/2,3])}
 \lesssim \ell^{3/10}\|f\|_{5/2}.
\]
Set $J=[1/2,3]$. The fundamental theorem of calculus and
H\"older's inequality give
\[
 \left\|\sup_{1\le t\le2}|F(\cdot,t)|\right\|_{L^5_x}^{5}
 \lesssim
 \|F\|_{L^5(\R^2\times J)}^{5}
 +\|F\|_{L^5(\R^2\times J)}^{4}
  \|\partial_tF\|_{L^5(\R^2\times J)}.
\]
Applying this to $F(x,t)=U(t)P_\ell f(x)$, and using the
preceding estimate and its differentiated version with an
additional factor $\ell$, we obtain
\[
 \left\|\sup_{1\leq t\leq2}|U(t)P_\ell f|\right\|_5
 \lesssim \ell^{1/5}\ell^{3/10}\|f\|_{5/2}
 =\ell^{1/2}\|f\|_{5/2}.
\]
Hence $\|\sup_{1\le t\le2}|V_\ell f|\|_5\lesssim\|f\|_{5/2}$, and
H\"older's inequality gives \eqref{eq:planar-model-gap} with a constant
in place of $\rho^c$. This also gives the estimate for a single
frequency used in \cref{sec:spherical}.

\subsection{The planar \texorpdfstring{$L^2$}{L2} estimate}
\label{sec:planar-l2}
Let $0<\gamma_2<1/12$. Since $(4-p)/[24(p-1)]\to1/12$ as $p\to2$, we
can choose $p\in(2,5/2)$ with $(4-p)/[24(p-1)]>\gamma_2$. Define $r$ by
$1/r=(2-2/p)/3$, and set $\kappa=p/[2(p-1)]$ and $b=\gamma_2/\kappa$, so
that $b<(4-p)/(12p)$. We first show that
\begin{equation}\label{eq:planar-x-gap}
 \|\sup_{1\le t\le2}|V_\lambda f\,V_\mu g|\|_{r}
 \lesssim(\lambda\mu)^{1/p-1/r}\rho^b\|f\|_{p}\|g\|_{p}.
\end{equation}

We apply \cref{lem:planar-angular-scale} with $s=2$ near
$(1/p_1,1/p_2,1/r)=(1/p,1/p,1/r)$. At this point the weights
\eqref{eq:planar-weights} are $2/r$, $1/p-1/r$, $1/p-1/r$, and $1-2/p$,
which are positive; moreover $p>2$, $5/2<r<3$, and $2/p>1-1/r$. Hence
the hypotheses of \cref{lem:planar-angular-scale} hold in a
neighborhood of this point. At the point we have $A=1/p-1/r$ and
$\gamma_\eps=(4-p)/(12p)-2\eps/r$, so we may choose $\eps>0$ and the
neighborhood so small that $\gamma_\eps-\eps>b$ on it. The argument of
\cref{sec:planar-sum}, with $a_0=1/p$ in
\eqref{eq:planar-interpolation-triangle} and with output exponents
given by $1/r=(2-1/p_1-1/p_2)/3$, gives \eqref{eq:planar-x-gap} for the
sum over the angular scales; the powers of $\lambda\mu$ at the three
points average to $1/p-1/r$.

For the near-diagonal part we interpolate \eqref{eq:planar-terminal-l2}
and the other three estimates in \eqref{eq:planar-four-endpoints}, with
$\theta=\theta_0$ and $\delta\simeq\lambda^{-1/2}$, using the weights
$2/r$, $1/p-1/r$, $1/p-1/r$, $1-2/p$. The four bounds are
$\lambda^{-1/2}\mu^{1/4}$, $\lambda\mu^{-1/2}$, $\mu\lambda^{-1/2}$,
and $\lambda^{-1/2}\mu^{-1/2}$, so the estimate for a subcap and a cap
has the factor $\lambda^{(3/p-3/r-1)/2}\mu^{(3/p-1)/2}$. As in
\cref{sec:planar-completion}, the sum over the subcaps and
\cref{lem:product-decoupling} cost $\rho^{-1/4+1/(2p)-\eps}$, and
Bernstein's inequality in $t$ contributes $\lambda^{1/r}$. Since
$2/p+3/r=2$, the result is
$(\lambda\mu)^{1/p-1/r}\rho^{(4-p)/(12p)-\eps}$, which proves
\eqref{eq:planar-x-gap} for $\eps$ small.

By \eqref{eq:planar-cap-kernels} with full aperture,
\[
 \|\sup_{1\le t\le2}|V_\lambda f\,V_\mu g|\|_\infty
 \lesssim\lambda\mu\|f\|_1\|g\|_1.
\]
We interpolate this with \eqref{eq:planar-x-gap}, with weight $\kappa$
on the latter. Since $\kappa/p+1-\kappa=1/2$, $\kappa/r=1/3$, and
$\kappa(1/p-1/r)+1-\kappa=1/6$, we obtain
\[
 \|\sup_{1\le t\le2}|V_\lambda f\,V_\mu g|\|_3
 \lesssim(\lambda\mu)^{1/6}\rho^{\gamma_2}\|f\|_2\|g\|_2.
\]
Apply this estimate to $f_\lambda,g_\mu$, using cutoffs equal to one
on their Fourier supports. Since $w^2$ is bounded below on $[1,2]$,
removing $w^2$ and the factor $\lambda^{-1/2}\mu^{-1/2}$ gives
\[
 \|\sup_{1\le t\le2}|Uf_\lambda\, Ug_\mu|\|_3
 \lesssim(\lambda\mu)^{2/3}\rho^{\gamma_2}\|f_\lambda\|_2\|g_\mu\|_2.
\]
For $\lambda\simeq\mu$ we interpolate the estimate for comparable
frequencies in \cref{sec:planar-completion} with the
$L^1\times L^1\to L^\infty$ estimate, with weight $5/6$ on the former.
This proves \cref{prop:wave-gap} for $d=2$.

\section{Proof of the wave maximal theorem}\label{sec:wave}
Fix $0<\gamma_d<\gamma_d^*$. We first extend \eqref{eq:wave-gap-local}
to all times and all frequencies, and then sum over dyadic
frequencies. Replacing $f_\lambda$ and
$g_\mu$ in \eqref{eq:wave-gap-local} by $U(-1)f_\lambda$ and
$U(-1)g_\mu$, which have the same Fourier supports and $L^2$
norms, we obtain the same estimate with the supremum over $[0,1]$.

Let $\lambda\ge\mu>0$, and let $T>0$ be large, so that $T\mu\ge2$. The
unitary dilation $D_Tf(x)=T^{d/2}f(Tx)$ maps functions of frequency
$\lambda$ to functions of frequency $T\lambda$, and
$U(t)D_Tf(x)=T^{d/2}(U(Tt)f)(Tx)$. Applying the estimate on $[0,1]$ to
$D_Tf_\lambda$ and $D_Tg_\mu$, and noting that $\rho$ is invariant under
this scaling, we obtain
\[
 T^{d-d/r_d}\|\sup_{0\le t\le T}|Uf_\lambda\,Ug_\mu|\|_{r_d}
 \lesssim T^{2s_c}(\lambda\mu)^{s_c}\rho^{\gamma_d}
 \|f_\lambda\|_2\|g_\mu\|_2.
\]
Since $d-d/r_d=2s_c$, the powers of $T$ cancel. Letting $T\to\infty$,
and using \cref{rem:signs} for negative times, since
$U(-t)=U_-(t)$, we obtain, for all
$\lambda\ge\mu>0$,
\begin{equation}\label{eq:wave-gap-global}
 \|\sup_{t\in\R}|Uf_\lambda\,Ug_\mu|\|_{r_d}
 \lesssim(\lambda\mu)^{s_c}\rho^{\gamma_d}\|f_\lambda\|_2\|g_\mu\|_2.
\end{equation}

Let $f_j=P_{2^j}f$, $j\in\Z$, and $b_j=2^{js_c}\|f_j\|_2$. For a finite
sum, we expand the square, use the triangle inequality in $L^{r_d}$,
and apply \eqref{eq:wave-gap-global} to each pair $(f_j,f_k)$, for which
$\rho\simeq2^{-|j-k|}$. This gives
\[
 \Big\|\sup_t\Big|U(t)\sum_jf_j\Big|\Big\|_{2r_d}^2
 \lesssim\sum_{j,k}2^{-\gamma_d|j-k|}b_jb_k
 \lesssim\sum_jb_j^2
 \lesssim\|f\|_{\dot H^{s_c}}^2,
\]
where the second inequality is Young's inequality on $\ell^2(\Z)$.
Since $2r_d=q_c$, this proves \eqref{eq:wave-main} for finite sums.
Applying the same bound to the tails gives convergence in the norm
$\|\sup_{t\in\mathbb Q}|\cdot|\|_{q_c}$. A subsequence converges
uniformly in $t$ for almost every $x$: for differences of finite
sums, continuity makes the supremum over $\mathbb Q$ equal to that
over $\R$. The resulting continuous-in-time representative agrees
with $U(t)f$ at each fixed $t$, by the embedding
$\dot H^{s_c}\hookrightarrow L^{q_c}$ and the unitarity of $U(t)$ on
$\dot H^{s_c}$. This proves \cref{thm:wave}.
\section{Proof of the spherical maximal theorem}\label{sec:spherical}
We prove \cref{thm:main} using \eqref{eq:wave-gap-local} when $d\ge3$
and \eqref{eq:planar-model-gap} when $d=2$. In both cases we obtain a
bilinear estimate for spherical averages, and then sum over
frequencies by \cref{lem:power-sum}.

\subsection{Reduction to half-wave propagators}
We express the frequency localized spherical averages in terms of the
half-wave propagators $U_\pm(t)$, with amplitudes that can be
removed by a Fourier expansion in $t$.
For $\ell\ge2$ and $1\le t\le2$ we write
\begin{equation}\label{eq:bessel}
 \widehat\sigma(t\xi)P_\ell(\xi)
 =\ell^{-(d-1)/2}\sum_{\pm}
 e^{\pm it|\xi|}a_{\pm,\ell}(t,\xi)P_\ell(\xi).
\end{equation}
This exact decomposition is obtained by writing the Bessel function
as the sum of its two Hankel components. Their asymptotic expansions give
$|\partial_t^k\partial_\xi^\alpha a_{\pm,\ell}|\lesssim_{k,\alpha}\ell^{-|\alpha|}$
on the support of $P_\ell$. We multiply each amplitude by a smooth
annular cutoff equal to one there, extend it smoothly in $t$
to a function supported in a slightly larger interval of length less
than $T_0$, and expand it in a Fourier series in $t$ with period
$T_0$:
\[
 a_{\pm,\ell}(t,\xi)=\sum_{m\in\Z}e^{2\pi imt/T_0}a_{\pm,\ell,m}(\xi).
\]
Integration by parts in $t$ and kernel estimates for the rescaled
symbols give, for every $N$,
\begin{equation}\label{eq:amplitude}
 \|a_{\pm,\ell,m}(D)\|_{L^p\to L^p}
 \lesssim_{N,p}(1+|m|)^{-N},\qquad1\le p\le\infty.
\end{equation}
When $d=2$ we expand $a_{\pm,\ell}/w$ instead, using an interval on
which $w$ does not vanish. In a product of two terms of these
expansions, the factor $e^{2\pi i(m+n)t/T_0}$ has modulus one and can
be removed before taking the supremum in $t$. By the triangle
inequality and \cref{rem:signs}, it suffices to consider products of
two terms with the sign $+$; note that
$\overline{a_{\pm,\ell,m}(D)g}=\widetilde a(D)\bar g$ with
$\widetilde a(\xi)=\overline{a_{\pm,\ell,m}(-\xi)}$, which satisfies
\eqref{eq:amplitude} as well. Hence the bilinear estimates of the
previous sections apply to each product with constants independent of
$m,n$, and the rapid decay in \eqref{eq:amplitude} allows us to sum
over $m,n$.

\subsection{A bilinear estimate for spherical averages}
We now prove \eqref{intro:eq:spherical-gap}, first for $d\ge3$ and then
for $d=2$.
Let $d\ge3$, fix $\gamma_d$ as in \cref{prop:wave-gap}, and set
\[
 \alpha_d=\frac{d-1}{2}-s_c>0,\qquad
 \vartheta_d=\frac1{1+\alpha_d}=\frac{2(d+1)}{d^2+1}.
\]
By \eqref{eq:bessel}, \eqref{eq:amplitude}, and
\eqref{eq:wave-gap-local},
\[
 \|\sup_{1\le t\le2}|A_tP_\lambda f\, A_tP_\mu g|\|_{r_d}
 \lesssim(\lambda\mu)^{-\alpha_d}\rho^{\gamma_d}\|f\|_2\|g\|_2.
\]
On the other hand,
$\|\sup_{1\le t\le2}|A_tP_\lambda f\, A_tP_\mu g|\|_\infty
\lesssim\lambda\mu\|f\|_1\|g\|_1$, since
$\|\sigma_t*\check P_\ell\|_\infty\lesssim\ell$ uniformly for
$t\in[1,2]$; this follows by integrating the rapidly decreasing kernel
of $P_\ell$ over the sphere of radius $t$. Interpolating these two
estimates with weight $\vartheta_d$ on the first (see
\cref{rem:linearization}), we see that the powers of $\lambda\mu$
cancel, and
\begin{equation}\label{eq:sphere-gap}
 \|\sup_{1\le t\le2}|A_tP_\lambda f\, A_tP_\mu g|\|_{q_d/2}
 \lesssim\rho^{\delta_d}\|f\|_{p_d}\|g\|_{p_d},
\end{equation}
where $\delta_d=\vartheta_d\gamma_d>0$; here we used
$1/p_d=1-\vartheta_d/2$ and $2/q_d=\vartheta_d/r_d$.

Let $d=2$. Expanding $a_{\pm,\ell}/w$ as above, we write the
circular average $A_tP_\ell f$ as a rapidly convergent sum of the
operators $V_\ell$, multiplied by the factors $e^{2\pi imt/T_0}$ and
applied to $a_{\pm,\ell,m}(D)f$. The multipliers
$a_{\pm,\ell,m}(D)$ are bounded on $L^{5/2}$ by
\eqref{eq:amplitude}. By \cref{rem:signs}, since $w$ is real-valued,
we may apply \eqref{eq:planar-model-gap} to each product of two terms,
and we obtain \eqref{eq:sphere-gap} with $\delta_2=c$; by
\cref{prop:planar-gap}, this includes comparable frequencies and
$\lambda=\mu$. With $\lambda=2^j$ and $\mu=2^k$, \eqref{eq:sphere-gap}
is \eqref{intro:eq:spherical-gap}.

\subsection{Summation over frequencies}
We apply \cref{lem:power-sum} with
\[
 (p,q)=(p_d,q_d),\qquad
 m=\begin{cases}3,&d=2,\\2,&d\ge3.\end{cases}
\]
The condition $\max\{2,p\}\le m\le q$ holds, since $5/2\le3\le5$ for
$d=2$, and $1<p_d<2<q_d$ for $d\ge3$. Together with
\eqref{eq:sphere-gap}, this bounds the contribution of the frequencies
$\ell=2^j$, $j\ge1$, by $C\|f\|_{p_d}$. The remaining low-frequency part of
$A_t$ has a kernel bounded by a fixed integrable rapidly decreasing
function, uniformly for $1\le t\le2$, so Young's inequality, with
$1+1/q_d=1/p_d+1/r$, controls its supremum. This proves
\eqref{eq:main} for Schwartz functions. Applying the bound to
differences of Schwartz approximations gives convergence in
$L^{q_d}_x(C([1,2]))$; at every fixed time, the limit agrees with
$A_tf$. Thus the estimate extends by density to $L^{p_d}$.
\begin{remark}[Stein's analytic family]\label{rem:analytic}
For $\alpha\in\mathbb C$ with $\Re\alpha>0$ let
\[
 A^\alpha_tf(x)=\Gamma(\alpha)^{-1}\int_{|y|<1}(1-|y|^2)^{\alpha-1}
 f(x-ty)\,dy,
\]
extend $A^\alpha_t$ to all $\alpha$ by analytic continuation, and let
$\M^\alpha f=\sup_{1\le t\le2}|A^\alpha_tf|$.
With $a=\Re\alpha$, set
\[
 \frac1{p_a}=\frac{d(d-1)+(d+1)a}{d^2+1},\qquad
 \frac1{q_a}=\frac{(d-1)(1-a)}{d^2+1}.
\]
The argument of this section gives
$\|\M^\alpha f\|_{q_a}\lesssim_{d,\alpha}\|f\|_{p_a}$ for $0\le a<1$
when $d=2$, and for $-(d^2-2d-1)/(2(d+1))\le a<1$ when $d\ge3$; the
case $\alpha=0$ recovers \cref{thm:main}, up to a dimensional
normalization.

For fixed $\alpha$, \eqref{eq:bessel} holds with prefactor
$\ell^{-(d-1)/2-\alpha}$ and the same symbol bounds, with
constants depending on $\alpha$. Moreover,
\[
 \sup_{1\le t\le2}
 \|A_t^\alpha P_\ell\|_{L^1\to L^\infty}
 \lesssim_{d,\alpha}\ell^{1-a}.
\]
This follows from angular stationary phase for $|x|\ge1/2$
and radial integration by parts for $|x|<1/2$ in the Bessel
representation, also when $a\le0$.

After the Bessel reduction, we interpolate the bilinear bound
from \eqref{eq:wave-gap-local} for $d\ge3$, or
\eqref{eq:planar-model-gap} for $d=2$, with the kernel bound
above, using weights $(1-a)\vartheta_d$ and $1-a$,
respectively. These weights lie in $(0,1]$ in the stated
ranges; weight one means direct application of the first
bound. The powers of $\lambda\mu$ cancel, and
\cref{rem:linearization} gives
\[
 \left\|\sup_{1\le t\le2}
 |A_t^\alpha P_\lambda f\,A_t^\alpha P_\mu g|\right\|_{q_a/2}
 \lesssim_{d,\alpha}
 \rho^{\delta_a}\|f\|_{p_a}\|g\|_{p_a},
\]
where $\delta_a=(1-a)\vartheta_d\gamma_d>0$ for $d\ge3$
and $\delta_a=(1-a)c>0$ for $d=2$, with $c$ as in
\eqref{eq:planar-model-gap}.

Since $1<p_a\le2<q_a$ for $d\ge3$ and
$1<p_a\le5/2<3<q_a$ for $d=2$,
\cref{lem:power-sum} applies with $m=2$ and $m=3$,
respectively. The low-frequency kernels have a uniform
Schwartz envelope, so Young's inequality and density
complete the proof.
These estimates lie on the boundary of the range of
sufficient conditions in \cite{LSSY2026}.
\end{remark}

\section*{Acknowledgments}
The authors were supported by the National Research Foundation of Korea
under Grant No.~RS-2024-00342160. The authors used GPT (OpenAI) to help develop their ideas, check
calculations, and improve the exposition of the manuscript. The authors
take full responsibility for the mathematical content and the final
manuscript.

\end{document}